\documentclass[reqno]{amsart}
\usepackage{amssymb,latexsym}
\usepackage{amsfonts,mathrsfs}
\usepackage[margin=1.4in]{geometry}
\usepackage[all]{xy}

\numberwithin{equation}{section}

\newtheorem{thm}{Theorem}[section]
\newtheorem{lem}[thm]{Lemma}

\newtheorem{prop}[thm]{Proposition}

\theoremstyle{definition}

\newcommand{\ba}{\begin{array}}
\newcommand{\ea}{\end{array}}

\def \qed{\cqfd}

\DeclareMathOperator \dist {dist}

\DeclareMathOperator \supp {supp}

\def\Xint#1{\mathchoice
{\XXint\displaystyle\textstyle{#1}}%
{\XXint\textstyle\scriptstyle{#1}}%
{\XXint\scriptstyle\scriptscriptstyle{#1}}%
{\XXint\scriptscriptstyle\scriptscriptstyle{#1}}%
\!\int}
\def\XXint#1#2#3{{\setbox0=\hbox{$#1{#2#3}{\int}$ }
\vcenter{\hbox{$#2#3$ }}\kern-.58\wd0}}
\def\intav{\Xint-}

\def\qed{\vbox{\hrule
\hbox{\vrule\hbox to 5pt{\vbox to 8pt{\vfil}\hfil}\vrule}\hrule}}

\newcommand{\beg}{\begin{eqnarray*}}
\newcommand{\begn}{\begin{eqnarray}}
\newcommand{\en}{\end{eqnarray*}}
\newcommand{\enn}{\end{eqnarray}}

\begin{document}
\title{some interior regularity estimates for solutions of complex Monge-Amp\`ere equations on a ball}
\subjclass[]{32W20}
\keywords{Complex Monge-Amp\`ere equation, Dirichlet problem on a ball,
 interior
regularity, interior $\mathcal C^{1,\alpha}$ estimate, interior $\mathcal C^{\alpha}$ estimate
}
\author{Chao Li}
\address{School of Mathematical Sciences\\
University of Science and Technology of China\\
Hefei, 230026,P.R. China\\ } \email{leecryst@mail.ustc.edu.cn}
\author{Jiayu Li}
\address{School of Mathematical Sciences\\
University of Science and Technology of China\\
Hefei, 230026\\ and AMSS, CAS, Beijing, 100080, P.R. China\\} \email{jiayuli@ustc.edu.cn}
\author{Xi Zhang}
\address{School of Mathematical Sciences\\
University of Science and Technology of China\\
Hefei, 230026,P.R. China\\ } \email{mathzx@ustc.edu.cn}
\thanks{The authors were supported in part by NSF in
China,  No.11625106, 11571332, 11526212.}

\maketitle

\begin{abstract} In this paper, we consider the Dirichlet problem of  a complex Monge-Amp\`ere equation on a ball in $\mathbb C^n$. With $\mathcal C^{1,\alpha}$ (resp. $\mathcal C^{0,\alpha}$) data, we prove an interior $\mathcal C^{1,\alpha}$ (resp. $\mathcal C^{0,\alpha}$) estimate for the solution. These estimates are generalized versions of the Bedford-Taylor interior $\mathcal C^{1,1}$ estimate.
\end{abstract}

\section{Introduction}

The complex Monge-Amp\`ere equation has many significant applications in complex analysis and complex geometry. In 1970s,  Yau (\cite{Y}) proved the Calabi conjecture by solving a complex complex Monge-Amp\`ere equation on a compact K\"ahler manifold. Since then the  complex Monge-Amp\`ere equation is always a subjuct of intensive studies. Benefiting from the development of theories about the complex Monge-Amp\`ere equation, many problem in complex geometry are solved (see e.g. \cite{Au,CY,MY,Kob,TY1,TY2,TY3,TY4,Ti1,Ti2}).

\medskip

Existence and regularity of solutions are basic objects in the study of the  complex Monge-Amp\`ere equation. Many people contributed a lot to related study (see e.q. \cite{BT,BT1,BT2,CKNS,Ti0,Gb,Ko,GPF,Ko1,Bl1,DZhZh,CH,WY,TWWY}). In \cite{BT}, Bedford and Taylor developed the theory of weak solutions and studied the Dirichlet problem of on a strictly pseudoconvex bounded domain. They proved

\begin{thm}[\cite{BT}, Theorem D]\label{thmBT1}
Let $\Omega$ be a pseudoconvex bounded domain in $\mathbb C^n$. If $0\leq f\in \mathcal C(\bar\Omega)$ and $\varphi\in \mathcal C(\partial\Omega)$, then there exist a unique weak solution $u\in PSH(\Omega)\cap (\bar\Omega)$ to the Dirichlet problem
\begin{equation}
\left\{
\begin{array}{rl}
\det(u_{i\bar j})=f, &\textrm{in }\Omega,\\
u=\varphi, &\textrm{on }\partial\Omega.
\end{array}
\right.
\end{equation}
Furthermore, if $\partial\Omega\in \mathcal C^2$, $\varphi\in \mathcal C^{i,\alpha}(\partial\Omega)$ and $f^{\frac{1}{n}}\in\mathcal C^{0,\frac{i+\alpha}{2}}(\Omega)$ with $i=0,1$ and $\alpha \in (0,1]$, then $u\in \mathcal C^{0,\frac{i+\alpha}{2}}(\bar\Omega)$.
\end{thm}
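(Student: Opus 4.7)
The plan is to combine the Bedford--Taylor theory of the Monge--Amp\`ere operator on bounded plurisubharmonic functions with the Perron method, then upgrade to H\"older regularity by a translation-and-comparison argument. I would first establish that for $u \in PSH(\Omega) \cap L^\infty_{\mathrm{loc}}(\Omega)$ the operator $(dd^c u)^n$ is well defined as a positive Radon measure, inductively by $(dd^c u)^k = dd^c\bigl(u(dd^c u)^{k-1}\bigr)$, and is continuous under monotone sequences. The cornerstone of everything that follows is the comparison principle: if $u,v \in PSH(\Omega)\cap \mathcal{C}(\bar\Omega)$ satisfy $u \leq v$ on $\partial\Omega$ and $(dd^c u)^n \geq (dd^c v)^n$ in $\Omega$, then $u \leq v$ throughout $\Omega$.

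For existence and uniqueness of a continuous weak solution with merely continuous data, I would introduce the Perron family
\[
\mathcal{F} = \{\, v \in PSH(\Omega)\cap \mathcal{C}(\bar\Omega) : v|_{\partial\Omega}\leq\varphi, \ (dd^c v)^n \geq c_n f\, dV\,\},
\]
with $c_n$ the standard normalization constant, and set $u := \sup_{v \in \mathcal{F}} v$. Nonemptiness of $\mathcal{F}$ and continuity of $u$ at $\partial\Omega$ come from the pseudoconvexity of $\Omega$, which yields a strictly plurisubharmonic defining function $\rho$; combining $\rho$ with a continuous extension of $\varphi$ produces subbarriers matching $\varphi$ at each boundary point. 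The envelope $u$ is plurisubharmonic, equals $\varphi$ on $\partial\Omega$, and satisfies $(dd^c u)^n = c_n f\, dV$ by a standard balayage step on small balls: otherwise solving the Monge--Amp\`ere Dirichlet problem locally would yield a strictly larger competitor in $\mathcal{F}$. Uniqueness is immediate from the comparison principle.

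For the H\"older estimate set $\gamma=(i+\alpha)/2$, and, for a small vector $h \in \mathbb{C}^n$, consider on $\Omega \cap (\Omega - h)$ the competitor
\[
w(z) = u(z+h) - A|h|^{\gamma} + B|h|^{\gamma}\rho(z),
\]
where $\rho$ is a strictly plurisubharmonic defining function of $\Omega$ with $\rho|_{\partial\Omega}=0$. The $\mathcal{C}^{0,(i+\alpha)/2}$ regularity of $f^{1/n}$ is exactly what is needed to absorb $|f(z+h)^{1/n}-f(z)^{1/n}|$ into the term $B|h|^\gamma \rho$ while preserving $(dd^c w)^n \geq c_n f(z)\, dV$, and the $\mathcal{C}^{i,\alpha}$ regularity of $\varphi$ combined with the $\mathcal{C}^2$ geometry of $\partial\Omega$ lets one tune $A,B$ so that $w\leq u$ on the boundary of the common domain. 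The comparison principle then promotes $w\leq u$ throughout $\Omega\cap(\Omega-h)$, and symmetrizing $h \leftrightarrow -h$ gives $|u(z+h)-u(z)|\leq C|h|^\gamma$.

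The main obstacle is the boundary step in the H\"older argument: transferring the $\mathcal{C}^{i,\alpha}$ boundary data a definite distance inward while maintaining the subsolution property is precisely what forces the halved exponent $(i+\alpha)/2$ characteristic of the complex Monge--Amp\`ere equation, and requires a quantitative interplay between the growth of $\rho$ near $\partial\Omega$, the $\mathcal{C}^2$ geometry of the boundary, and the plurisubharmonic envelope of $\varphi$. Once this modulus of continuity is obtained at the boundary, the translation argument above propagates it to the interior without further loss.
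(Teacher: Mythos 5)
This statement is quoted from Bedford--Taylor (\cite{BT}, Theorem D); the paper itself offers no proof, so your outline can only be measured against the original argument, and in architecture it coincides with it: pluripotential definition of $(dd^cu)^n$ with continuity under monotone limits, the comparison principle, a Perron envelope for existence, and a translation--comparison argument for the H\"older modulus. One preliminary caveat: the strictly plurisubharmonic defining function $\rho$ on which both your barriers and your competitor $w$ rest exists only for \emph{strictly} pseudoconvex domains (which is what \cite{BT} actually assume); for a merely pseudoconvex bounded domain such as the bidisc the Dirichlet problem with arbitrary continuous boundary data need not admit a solution continuous up to the boundary, so the hypothesis as written in the paper is already too weak for your (and BT's) construction.

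The genuine gap is that the translation argument cannot be closed as you set it up. On $\partial\bigl(\Omega\cap(\Omega-h)\bigr)$ one of the two points $z$, $z+h$ lies on $\partial\Omega$, so the required boundary inequality $w\leq u$ there reduces to
\begin{equation*}
u(z+h)-u(z)\leq A|h|^{(i+\alpha)/2}\qquad\text{whenever } z\in\partial\Omega\ \text{or}\ z+h\in\partial\Omega,
\end{equation*}
which is precisely the boundary H\"older modulus being proved; you name this circularity as ``the main obstacle'' but never resolve it. What is missing is the explicit barrier construction that must come \emph{first}: a global subbarrier of the form $\tilde\varphi+C\rho$ for a suitable extension $\tilde\varphi$ of $\varphi$, and for each $\zeta\in\partial\Omega$ a plurisuperharmonic majorant built from expressions of the type $\varphi(\zeta)+K\bigl(|z-\zeta|^2-\rho(z)\bigr)^{(i+\alpha)/2}$. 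It is in checking that these are respectively plurisubharmonic (resp.\ dominate every competitor) while matching $\varphi$ on $\partial\Omega$ that the exponent gets halved: a plurisubharmonic barrier controls tangential increments only through $|z-\zeta|^2$, so a $\mathcal C^{i,\alpha}$ oscillation of $\varphi$ is recovered only at the rate $|z-\zeta|^{(i+\alpha)/2}$. Only once this boundary estimate $|u(z)-\varphi(\zeta)|\leq C|z-\zeta|^{(i+\alpha)/2}$ is in hand does your translation step propagate the modulus to the interior. As it stands the proposal is a correct skeleton of the Bedford--Taylor proof, but the quantitative step that produces the exponent $(i+\alpha)/2$ is asserted rather than proved.
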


Bedford and Taylor pointed out that, in the secont part of Theorem \ref{thmBT1}, the result $u\in \mathcal C^{0,\frac{i+\alpha}{2}}(\bar\Omega)$ is optimal according to the H\"older exponent. Even if we assume $\varphi\in \mathcal C^{i,\alpha}(\partial\Omega)$ and $f^{\frac{1}{n}}$ is smooth, generally $u$ doesn't have better global regularity. However, they proved an interior $\mathcal C^{1,1}$ estimate to the solution when $\Omega$ is the unit ball.
\begin{thm}[\cite{BT}, Theorem C] \label{thmBT2} Let B be the unit ball in $\mathbb C^n$. If $u\in PSH(B)\cap \mathcal C(\bar B)$ is the weak solution of the Dirichlet problem
\begin{equation}
\left\{
\begin{array}{rl}
\det(u_{i\bar j})=f, &\textrm{in }B,\\
u=\varphi, &\textrm{on }\partial B,
\end{array}
\right.
\end{equation}
where $\varphi\in \mathcal C^{1,1}(\partial B)$ and $0\leq f^{\frac{1}{n}}\in \mathcal C^{1,1}(\bar{B})$. Then $u\in \mathcal C^{1,1}(B)$.
\end{thm}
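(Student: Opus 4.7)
The plan is to establish, for each compactly contained $B' \subset\subset B$ and each sufficiently small $h \in \mathbb{C}^n$, the quadratic second-difference estimate
$$\bigl|\,u(z+h)+u(z-h)-2u(z)\,\bigr| \leq C_{B'}\,|h|^2 \qquad (z \in B'),$$
from which $u \in \mathcal{C}^{1,1}(B')$ follows by the standard characterization of $\mathcal{C}^{1,1}$ via bounded second differences. The upper bound will be obtained by building a plurisubharmonic subsolution out of translates of $u$ and applying the Bedford--Taylor comparison principle; the matching lower bound will come from a dual construction (producing a supersolution from a sup-convolution type process) and comparison in the reverse direction.

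On $\Omega_h := \{z \in B : z\pm h \in B\}$ set $v_h(z) := \tfrac{1}{2}\bigl(u(z+h)+u(z-h)\bigr)$, which is PSH as an average of PSH functions. By concavity of $M \mapsto (\det M)^{1/n}$ on positive Hermitian matrices together with $\mathcal{C}^{1,1}$ regularity of $f^{1/n}$,
$$\bigl(\det(v_h)_{i\bar j}(z)\bigr)^{1/n} \;\geq\; \tfrac{1}{2}\bigl(f^{1/n}(z+h)+f^{1/n}(z-h)\bigr) \;\geq\; f^{1/n}(z) - C|h|^2.$$
Hence $\tilde v_h(z) := v_h(z) + A|h|^2(|z|^2-1)$ is, for $A$ large depending only on $\|f^{1/n}\|_{\mathcal{C}^{1,1}}$, a PSH subsolution of $\det w_{i\bar j} = f$ on $\Omega_h$, since the extra term contributes $A|h|^2\,\mathrm{Id}$ to the complex Hessian and so absorbs the deficit $C|h|^2$. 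The Bedford--Taylor comparison principle then gives $\tilde v_h \leq u$ on $\Omega_h$ provided one has $\tilde v_h \leq u$ on $\partial\Omega_h$; since $|z|^2 - 1 \leq 0$, this yields $v_h(z) \leq u(z) + C|h|^2$ on $B'$, i.e.\ the upper half of the target estimate.

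The decisive step is the boundary bound $\tilde v_h \leq u$ on $\partial\Omega_h$, and this is the principal obstacle. Each $z \in \partial\Omega_h$ has $z+h$ or $z-h$ on $\partial B$, so the inequality reduces to a second-order boundary expansion of $u$ along $\partial B$. Merely $\mathcal{C}^{0,1}$ control of $u$ up to $\partial B$---all that is provided by the general existence theory on strictly pseudoconvex domains (Theorem~\ref{thmBT1})---only yields an $O(|h|)$ bound, which is insufficient. To upgrade to the required $O(|h|^2)$, one first produces a $\mathcal{C}^{1,1}$ PSH extension $\Phi$ of $\varphi$ to $\bar B$ (using $\varphi \in \mathcal{C}^{1,1}(\partial B)$ and the $\mathcal{C}^{1,1}$ strict pseudoconvexity of $\partial B$), then uses $\Phi \pm K(|z|^2-1)$ with $K$ large as $\mathcal{C}^{1,1}$ sub- and supersolutions of $\det w_{i\bar j} = f$ that sandwich $u$ near $\partial B$ to second order in the distance to $\partial B$, and finally invokes the transitive action of the holomorphic automorphism group of $B$ to propagate the resulting pointwise estimates uniformly around $\partial B$. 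It is exactly this symmetry step that fails on a general strictly pseudoconvex domain, which is consistent with the sharpness of the boundary regularity noted after Theorem~\ref{thmBT1}.
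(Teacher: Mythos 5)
Your overall strategy (second differences, concavity of $M\mapsto(\det M)^{1/n}$, the corrector $A|h|^2(|z|^2-1)$, and the Bedford--Taylor comparison principle) is indeed the right skeleton, but the proof as written has a genuine gap exactly at the step you yourself flag as decisive: the boundary inequality $\tilde v_h\leq u$ on $\partial\Omega_h$. With Euclidean translates, a point $z\in\partial\Omega_h$ has, say, $z+h\in\partial B$ but $z$ and $z-h$ strictly inside, so you must bound $u(z+h)+u(z-h)-2u(z)$ with interior values of $u$ appearing. The barriers $\Phi\pm K(|z|^2-1)$ you propose only pin $u$ to $\Phi$ to \emph{first} order in $\dist(z,\partial B)$ (the gap between the two barriers at $z$ is $2K(1-|z|^2)\sim 4K\dist(z,\partial B)$), not to second order as you assert; since $\dist(z,\partial B)\leq|h|$ and $\dist(z-h,\partial B)\leq 2|h|$ here, this yields only $u(z+h)+u(z-h)-2u(z)\leq O(|h|^2)+O(|h|)=O(|h|)$, which is insufficient --- and this is not a removable technicality, because the solution genuinely fails to be better than Lipschitz up to $\partial B$. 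The closing phrase ``invoke the transitive action of the holomorphic automorphism group to propagate the estimates'' is not a proof; the correct use of the automorphisms is structural and must replace the translations from the outset. One sets $u_x(z)=u(T_{-x}(z))$ with $T_a$ as in (\ref{Ta}) and takes the second difference \emph{in the parameter} $x$: the functions $u_{x\pm h}$ restrict on $\partial B$ to $\varphi\circ T_{-x\mp h}$, with no interior values of $u$ involved at all, and the $O(|h|^2)$ boundary bound follows from $\varphi\in\mathcal C^{1,1}(\partial B)$ together with the smoothness of $(a,z)\mapsto T_a(z)$; since $u_{x\pm h}(0)=u(x\pm h)$, evaluation at $z=0$ returns the desired interior second-difference bound. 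This is exactly the mechanism of Lemma \ref{klm1} (with $\alpha=1$), and it is also why the argument is confined to the ball.

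A second, smaller defect: the ``matching lower bound via a dual sup-convolution supersolution'' is both unsubstantiated (there is no evident way to manufacture a supersolution of the Monge--Amp\`ere equation from translates of $u$) and unnecessary. Since $u$ is plurisubharmonic, hence subharmonic, one has $\intav_{\partial B_h(x)}u\,d\sigma-u(x)\geq 0$; averaging the one-sided inequality $u(x+y)+u(x-y)-2u(x)\leq C|y|^2$ over the sphere $|y|=h$ then gives $0\leq\intav_{\partial B_h(x)}u\,d\sigma-u(x)\leq\frac{1}{2}Ch^2$, and this two-sided mean-value estimate characterizes $\mathcal C^{1,1}$ (the $\alpha=1$ case of Lemma \ref{klm2}). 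So the lower bound is free from subharmonicity; no dual construction is needed.
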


The Bedford-Taylor interior $\mathcal C^{1,1}$ estimate has some significant applications. It can be used to study the higher regularity of solutions to the complex Monge-Amp\`ere equation. For example, In Theorem \ref{thmBT2}, if in additionally  $0<f\in \mathcal C^{\infty}(\Omega)$, then based on the  interior $\mathcal C^{1,1}$ estimate, we can prove $u\in \mathcal C^{\infty}(\Omega)$.

\medskip

Bedford and Taylor didn't establish analogous interor $\mathcal C^{1,\alpha}$ or $\mathcal C^{0,\alpha}$ estimates. In the past for a long period of time, it was hard to study local higher regularity of a $\mathcal C^{1,\alpha}$ or $\mathcal C^{0,\alpha}$ solution to a complex Monge-Amp\`ere equation. This might be a reason why the Bedford-Taylor interior $\mathcal C^{1,1}$ estimate was not generalized to $\mathcal C^{1,\alpha}$ or $\mathcal C^{0,\alpha}$ version.

Recently, in \cite{LLZ}, the authors considered the  complex Monge-Amp\`ere equation on a bounded domain $\Omega$ in $\mathbb C^n$
\begin{equation}
\det(u_{i\bar j})=f,
\end{equation}
where $0<f\in \mathcal C^{\alpha}(\Omega)\ (\alpha\in (0,1))$. By using Bedford-Taylor interior $\mathcal C^{1,1}$ estimate (\cite{BT}) and Caffarelli-
Kohn-Nirenberg-Spruck's result (\cite{CKNS}), they proved, if $u\in \mathcal C^{1,\beta}(\Omega)$ with $\beta\in (\beta_0(n,\alpha),1)$, where $\beta_0(n,\alpha)>0$ depend only on $n$ and $\alpha$, then $u\in\mathcal C^{2,\alpha}(\Omega)$. According to this result, we think it is interesting to generalize the Bedford-Taylor interior $\mathcal C^{1,1}$ estimate to the $\mathcal C^{1,\alpha}$ case. In fact, we can prove

\begin{thm} \label{in1a}Let $B_r(0)$ be a ball in $\mathbb{C}^n$. If $u\in PSH(B_r(0))\cap\mathcal{C}(\bar{B}_r(0))$ is the solution to the Dirichlet problem
\begin{equation}\left\{
\begin{array}{rl}
\det(u_{i\bar{j}})=f,&\textrm{in } B_r(0),\\
u=\varphi,&\textrm{on } \partial B_r(0),
\end{array}
\right.
\end{equation}
where $\varphi\in \mathcal{C}^{1,\alpha}(\partial B_r(0))$, $f\geq 0$ and $f^{\frac{1}{n}}\in \mathcal{C}^{1,\alpha}(\bar{B}_r(0))$, $\alpha\in(0,1)$. Then $u\in \mathcal{C}^{1,\alpha}(B_r(0))$. Furthermore, for any $t\in(0,1)$, we have
\begin{equation}[u]_{1,\alpha;B_{(1-t)r}(0)}\leq C(n,\alpha,t)([\varphi]_{1,\alpha;\partial B_r(0)}+r^{1-\alpha}|f^{\frac{1}{n}}|'_{1,\alpha;B_r(0)}),\end{equation}
where the constant $C(n,\alpha,t)$ depend only on $n$, $\alpha$ and $t$.
\end{thm}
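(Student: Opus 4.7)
The plan is to mimic Bedford--Taylor's proof of Theorem \ref{thmBT2}, sharpening the quadratic $|h|^2$ bounds used there to $|h|^{1+\alpha}$ bounds that exploit only the $\mathcal C^{1,\alpha}$-regularity of $\varphi$ and $f^{1/n}$. After rescaling so that $r=1$, I would fix $z_0 \in B_{1-t}(0)$ and small $h\in\mathbb C^n$, and construct a pair of holomorphic ball automorphisms $\psi_{\pm h}$ that are symmetric about the identity in the sense
\[
\psi_h(z)+\psi_{-h}(z)=2z+O(|h|^2),\qquad \psi_{\pm h}(z_0)=z_0\pm h+O(|h|^2).
\]
Such $\psi_{\pm h}$ can be produced from the $SU(n,1)$-M\"obius transformations by conjugating the standard ball translations $T_{\pm h}$ by the automorphism sending $0\mapsto z_0$.

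Next set $v(z):=\tfrac12(u\circ\psi_h+u\circ\psi_{-h})(z)$. Then $v$ is psh, and the transformation rule $\det((u\circ\psi)_{i\bar j})=|\det D\psi|^2\,(\det u_{k\bar l})\circ\psi$ combined with the concavity of $M\mapsto\det(M)^{1/n}$ on the positive Hermitian cone gives
\[
[\det v_{i\bar j}]^{1/n}\;\geq\;\tfrac12\Bigl[|\det D\psi_h|^{2/n}(f\circ\psi_h)^{1/n}+|\det D\psi_{-h}|^{2/n}(f\circ\psi_{-h})^{1/n}\Bigr].
\]
The symmetric cancellation in $\psi_{\pm h}$ kills the first-order variation, and the $\mathcal C^{1,\alpha}$-regularity of $f^{1/n}$ reduces the right-hand side to $f^{1/n}(z)-K|h|^{1+\alpha}|f^{1/n}|'_{1,\alpha}$; the same expansion on $\partial B$ using $\varphi\in\mathcal C^{1,\alpha}(\partial B)$ yields $|v-\varphi|\leq K|h|^{1+\alpha}[\varphi]_{1,\alpha}$ there. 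Setting $K':=K|f^{1/n}|'_{1,\alpha}$ and adding the barrier $K'|h|^{1+\alpha}(|z|^2-1)$ (whose complex Hessian is $K'|h|^{1+\alpha}\,\mathrm{Id}$, so its $\det^{1/n}$ equals $K'|h|^{1+\alpha}$) upgrades $v$ to a psh subsolution
\[
\tilde v:=v+K'|h|^{1+\alpha}(|z|^2-1)-K|h|^{1+\alpha}[\varphi]_{1,\alpha}
\]
of $\det(\cdot)\geq f$ with boundary values $\leq u=\varphi$. The Bedford--Taylor comparison principle then forces $\tilde v\leq u$ in $B$, and evaluating at $z_0$ (absorbing the $O(|h|^2)$ discrepancy from the construction into $|h|^{1+\alpha}$) yields the one-sided symmetric-second-difference bound
\[
u(z_0+h)+u(z_0-h)-2u(z_0)\;\leq\;C(n,\alpha,t)\,|h|^{1+\alpha}\bigl([\varphi]_{1,\alpha;\partial B}+|f^{1/n}|'_{1,\alpha;B}\bigr).
\]

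The main obstacle is promoting this one-sided bound to the full $\mathcal C^{1,\alpha}$-seminorm estimate of the theorem. For $\alpha\in(0,1)$, the Zygmund characterisation of $\mathcal C^{1,\alpha}$ requires a \emph{two}-sided bound on the symmetric second difference by $|h|^{1+\alpha}$, whereas the subsolution argument above supplies only the upper half. The matching lower bound must come from the plurisubharmonicity of $u$ together with the structure of the Monge-Amp\`ere equation. One natural strategy is to slice $u$ along the complex line through $z_0$ in direction $h$, where subharmonicity in one complex variable combined with the upper bound on circles of radius $|h|$ would, via a Fefferman--Zygmund-style argument, control the negative part of the second difference; another is to apply the same automorphism-averaging construction but against a supersolution obtained from the harmonic majorant of $\varphi$, which would produce the matching lower estimate by a dual comparison. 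Once the two-sided bound is in hand, a standard Campanato-type argument yields the $\mathcal C^{1,\alpha}$-seminorm estimate on $B_{(1-t)r}(0)$ with the stated dependence on $[\varphi]_{1,\alpha}$ and $|f^{1/n}|'_{1,\alpha}$, the constants $C(n,\alpha,t)$ tracking through the scaling to $r=1$ and the choices of $K,K'$ above.
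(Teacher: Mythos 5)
Your first half --- the symmetric second-difference bound
\[
u(z_0+h)+u(z_0-h)-2u(z_0)\le C(n,\alpha,t)\,|h|^{1+\alpha}\bigl([\varphi]_{1,\alpha;\partial B}+|f^{1/n}|'_{1,\alpha;B}\bigr)
\]
via M\"obius automorphisms, concavity of $\det^{1/n}$, the quadratic barrier $|z|^2-1$, and the Bedford--Taylor comparison principle --- is essentially the paper's Lemma \ref{klm1}, and is fine modulo routine details (the paper also normalizes $\varphi$ by extending it to $\bar B$ and subtracting its linear part at $0$ so that the constants close up).

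The second half has a genuine gap: you correctly identify that the one-sided Zygmund bound is not enough for $\mathcal C^{1,\alpha}$, but you then only list two candidate strategies without executing either, and the ``Campanato-type argument'' at the end is asserted rather than proved. The resolution in the paper is different from both of your suggestions and is worth knowing: one never upgrades the pointwise second difference to a two-sided bound. Instead one averages it over spheres. Since $\intav_{\partial B_h}u(x+y)\,d\sigma_y=\intav_{\partial B_h}u(x-y)\,d\sigma_y$, the one-sided bound gives $\intav_{\partial B_h}u(x+y)\,d\sigma_y-u(x)\le \tfrac12 A h^{1+\alpha}$, while plurisubharmonicity (hence subharmonicity) gives the matching lower bound $\ge 0$ for free. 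What is then needed --- and what constitutes the real technical content you are missing --- is a lemma (Lemma \ref{klm2}) asserting that a continuous function $v$ with $\bigl|\intav_{\partial B_h(x)}v-v(x)\bigr|\le Ah^{1+\alpha}$ for all admissible $x,h$ lies in $\mathcal C^{1,\alpha}$ with a quantitative seminorm bound. Its proof is not a one-line Campanato iteration: one mollifies $v$ at dyadic scales $h_k=2^{-k}t$, shows $|v_{h}-v|_0\lesssim Ah^{1+\alpha}$ and $|\Delta v_h|_0\lesssim Ah^{\alpha-1}$ from the mean-value hypothesis, and then sums interior Schauder estimates (Proposition \ref{sch1a'}) for the telescoping differences $w_k=v_{h_{k+1}}-v_{h_k}$. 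Without this lemma (or a worked-out substitute), your argument does not reach the stated conclusion.
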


In order to  prove Theorem \ref{in1a}, we follow the work of Bedford and Taylor's method and prove  the solution satisfies a second-order difference type inequality (Lemma \ref{klm1}), then use this property to prove the solution is locally $\mathcal C^{1,\alpha}$ continuous (by Lemma \ref{klm2}).

\medskip

For the $\mathcal C^{0,\alpha}\ (\alpha\in (0,1])$ case, we also have the following result

\begin{thm} \label{in0a}Let $B_r(0)$ be a ball in $\mathbb{C}^n$. If $u\in PSH(B_r(0))\cap\mathcal{C}(\bar{B}_r(0))$ is the solution to the Dirichlet problem
\begin{equation}\left\{
\begin{array}{rl}
\det(u_{i\bar{j}})=f,&\textrm{in } B_r(0),\\
u=\varphi,&\textrm{on } \partial B_r(0),
\end{array}
\right.
\end{equation}
where $\varphi\in \mathcal{C}^{0,\alpha}(\partial B_r(0))$, $f\geq 0$ and $f^{\frac{1}{n}}\in \mathcal{C}^{0,\alpha}(\bar{B}_r(0))$, $\alpha\in(0,1]$. Then $u\in \mathcal{C}^{0,\alpha}(B_r(0))$. Furthermore, for any $t\in(0,1)$, we have
\begin{equation}[u]_{0,\alpha;B_{(1-t)r}(0)}\leq C(n,t)([\varphi]_{0,\alpha;\partial B_r(0)}+r^{2-\alpha}|f^{\frac{1}{n}}|'_{0,\alpha;B_r(0)}),\end{equation}
where the constant $C(n,t)$ depend only on $n$ and $t$.
\end{thm}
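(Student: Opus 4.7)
The plan is to follow Bedford-Taylor's ball-automorphism strategy used in Theorem \ref{thmBT2}, adapted to the $\mathcal C^{0,\alpha}$ setting. After the standard rescaling $\tilde u(z)=u(rz)$, $\tilde\varphi(w)=\varphi(rw)$, $\tilde f(z)=r^{2n}f(rz)$, one passes to $r=1$; the seminorms rescale as $[\tilde u]_{0,\alpha;B_1}=r^\alpha[u]_{0,\alpha;B_r}$, $[\tilde\varphi]_{0,\alpha;\partial B_1}=r^\alpha[\varphi]_{0,\alpha;\partial B_r}$ and $[\tilde f^{1/n}]_{0,\alpha;B_1}=r^{2+\alpha}[f^{1/n}]_{0,\alpha;B_r}$, producing the factor $r^{2-\alpha}$ in the conclusion. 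The key idea is that a holomorphic automorphism $\Psi$ of $B_1$ sending $z_0\in B_{1-t}$ to $z_1\in B_{1-t}$ has $C^1$-norm uniformly comparable to $|z_0-z_1|$ on all of $\bar B_1$, so comparing $u$ with $u\circ\Psi$ via a quantitative comparison principle produces a first-difference Hölder bound without having to establish a sharp boundary Hölder estimate for $u$ itself.

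Concretely, for $z_0,z_1\in B_{1-t}$ with $\delta=|z_0-z_1|$, I put $\Psi=\phi_{z_1}\circ\phi_{z_0}$, where $\phi_a$ is the standard Möbius involution of $B_1$ interchanging $0$ and $a$, so $\Psi(z_0)=z_1$ and a direct computation yields $\|\Psi-\mathrm{Id}\|_{C^1(\bar B_1)}\le C(n,t)\delta$. The pullback $\tilde u(z):=u(\Psi(z))$ is psh on $\bar B_1$, equals $\varphi\circ\Psi$ on $\partial B_1$, and satisfies $\det\tilde u_{i\bar j}(z)=f(\Psi(z))|\det D\Psi(z)|^2=:\tilde f(z)$. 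The boundary difference is controlled directly by the Hölder continuity of $\varphi$,
$$\sup_{\partial B_1}|u-\tilde u|=\sup_{\partial B_1}|\varphi-\varphi\circ\Psi|\le C(n,t)[\varphi]_{0,\alpha;\partial B_1}\,\delta^{\alpha},$$
and the interior difference of the right-hand sides is controlled by $|f^{1/n}|'_{0,\alpha;B_1}$,
$$\sup_{B_1}|f^{1/n}-\tilde f^{1/n}|\le C(n,t)\,|f^{1/n}|'_{0,\alpha;B_1}\,\delta^{\alpha},$$
after absorbing the linear-in-$\delta$ contribution from $|\det D\Psi|^{2/n}-1$ into $\delta^{\alpha}$ (valid since $\delta\le 1$). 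The classical complex Monge-Ampère comparison, applied to the auxiliary psh function $u+A(|z|^2-1)$ with $A=\sup_{B_1}|f^{1/n}-\tilde f^{1/n}|$, yields $\sup_{B_1}|u-\tilde u|\le\sup_{\partial B_1}|u-\tilde u|+\sup_{B_1}|f^{1/n}-\tilde f^{1/n}|$, so that
$$|u(z_0)-u(z_1)|=|u(z_0)-\tilde u(z_0)|\le C(n,t)\bigl([\varphi]_{0,\alpha;\partial B_1}+|f^{1/n}|'_{0,\alpha;B_1}\bigr)\delta^{\alpha}.$$
A $\mathcal C^{0,\alpha}$-analogue of Lemma \ref{klm2} then promotes this pointwise bound to the claimed seminorm inequality.

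The main technical point is the uniform $C^1$-control of $\Psi$ and $D\Psi$ on $\bar B_1$. The denominator $1-\langle z,z_i\rangle$ appearing in $\phi_{z_i}$ degenerates as $z_i\to\partial B_1$, so one gets a Lipschitz constant of order $(1-|z_i|)^{-O(1)}$; this is exactly where the restriction $z_0,z_1\in B_{(1-t)r}$ enters and forces the constant $C(n,t)$ to depend on $t$. Once this geometric lemma on the Möbius involutions is in hand, the rest of the argument is a routine application of the comparison principle.
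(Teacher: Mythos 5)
Your proposal is correct and follows essentially the same route as the paper: the paper likewise composes $u$ with M\"obius automorphisms of the ball (comparing $u_{x_1}=u\circ T_{-x_1}$ with $u_{x_2}$ at the origin, which is equivalent to your comparison of $u$ with $u\circ\Psi$ at $z_0$) and concludes via the comparison principle applied to $u_{x_1}+|x_1-x_2|^{\alpha}(-A_1+A_2(|z|^2-1))$. The only superfluous step in your write-up is the final appeal to a $\mathcal C^{0,\alpha}$ analogue of Lemma \ref{klm2}: the bound $|u(z_0)-u(z_1)|\leq C\delta^{\alpha}$ for all pairs in $B_{1-t}$ \emph{is} already the claimed seminorm estimate, so no mean-value upgrade is needed.
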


\medskip

The  Bedford-Taylor interior $\mathcal C^{1,1}$ estimate can be generalized to some special pseudoconvex domain, e.g. polydisks (see \cite{Bl1}). Similarly, Theorem \ref{in1a} and \ref{in0a} can be generalized to these domains. In this paper, we don't go into details.

\medskip

Now we give an overview of this paper. In Section 2, we introduce some notations for  H\"oler (semi-)norms of functions and review the Schauder interior $\mathcal C^{1,\alpha}$ estimate for Poisson equations. In Section 3 and Section 4, we give proof to Theorem \ref{in1a} and \ref{in0a} respectively. In the last section, we introduce some analogous results for complex Monge-Amp\`ere equations on Hermitian manifolds.

\vspace{0.5cm}

\section{Preliminary}

\subsection{Notations for H\"older norms and semi-norms of functions}

For convenience, we first introduce some notations used in \cite{GT}.

Let $\Omega \subset \mathbb{R}^n$ be a bounded open domain. For any $x,y\in\Omega$, we set
\begin{equation*}d_x=\dist(x,\partial\Omega), \qquad d_{x,y}=\min\{d_x,d_y\}.\end{equation*}
For any $u\in \mathcal C^k(\Omega)$ or $u \in  \mathcal{C}^{k,\alpha}(\Omega)$ ($\alpha\in (0,1] $), we define the following quantities:
\begin{equation}\begin{aligned}
&|u|_{0;\Omega}=\sup\limits_{x\in \Omega}|u(x)|,\\
&[u]_{k,0;\Omega}=[u]_{k;\Omega}=\sup\limits_{x\in \Omega}|\nabla^k u(x)|,\\
&[u]_{k,\alpha;x}=\sup\limits_{\begin{subarray}{c} y\in \Omega\\ y\neq x\end{subarray}}\frac{|\nabla^k u(y)-\nabla^k u(x)|}{|y-x|^{\alpha}},\\
&[u]_{k,\alpha;\Omega}=\sup\limits_{x\in\Omega}[u]_{k,\alpha;x}=\sup\limits_{\begin{subarray}{c}x,y\in \Omega\\x\neq y\end{subarray}}\frac{|\nabla^k u(x)-\nabla^k u(y)|}{|x-y|^{\alpha}},\\
\end{aligned}\end{equation}
and
\begin{equation}\begin{aligned}
&[u]^*_{k,0;\Omega}=[u]^*_{k;\Omega}=\sup\limits_{x\in\Omega}d_x^k|\nabla^k\! u(x)|,\\
&|u|^*_{k,0;\Omega}=|u|^*_{k;\Omega}=\sum\limits_{i=0}^{k}[u]^*_{i;\Omega},\\
&[u]^*_{k,\alpha;\Omega}=\sup\limits_{\begin{subarray}{c}x,y\in\Omega\\x\neq y\end{subarray}}d_{x,y}^{k+\alpha} \frac{|\nabla^k\! u(x)-\nabla^k\! u(y)|}{|x-y|^{\alpha}},\\
&|u|^*_{k,\alpha;\Omega}=|u|^*_{k;\Omega}+[u]^*_{k,\alpha;\Omega}.
\end{aligned}\end{equation}

\medskip

By the definitions, we see that
\begin{equation}|u|_{0;\Omega}=[u]_{0;\Omega}=|u|^*_{0;\Omega},\end{equation}
and
\begin{equation}[u]^*_{k-1,1;\Omega}\leq C(n,k)([u]^*_{k-1;\Omega}+[u]^*_{k;\Omega}),\qquad [u]^*_{k;\Omega}\leq C(n,k)[u]^*_{k-1,1;\Omega},\end{equation}
for $k\geq 1$.
When $\Omega$ is convex, we also have
\begin{equation}[u]_{k-1,1;\Omega}\leq C(n,k)[u]_{k;\Omega}, \qquad [u]_{k;B_r}\leq C(n,k)[u]_{k-1,1;\Omega},\end{equation}
for $k\geq 1$.

\medskip

When $\Omega=B_r$ is a ball of radius $r$, we also use the following notations
\begin{equation}\begin{aligned}
&|u|'_{k,0;\Omega}=|u|'_{k;\Omega}=\sum\limits_{i=0}^{k}r^i[u]_{i;\Omega},\\
&|u|'_{k,\alpha;\Omega}=|u|'_{k;\Omega}+r^{k+\alpha}[u]_{k,\alpha;\Omega}.
\end{aligned}\end{equation}

\medskip

For any $f\in \mathcal{C}(\Omega)$ or $f\in \mathcal{C}^{\alpha}(\Omega)\ (\alpha\in (0,1])$, we define
\begin{align}
&|f|^{(k)}_{0,;\Omega}=\sup\limits_{x\in\Omega}d_x^k|f(x)|,\\
&|f|^{(k)}_{0,\alpha;\Omega}=\sup\limits_{x\in\Omega}d_x^k|f(x)|+\sup\limits_{\begin{subarray}{c}x,y\in\Omega\\x\neq y\end{subarray}}d_{x,y}^{k+\alpha} \frac{|f(x)-f(y)|}{|x-y|^{\alpha}}.\end{align}

\medskip

In this paper we will consider the Dirichlet problem on a ball, so we also introduce some notations for H\"older norms and semi-norms of functions defined on sphere.

Let $B_r$ be a balll of radius $r$ in $\mathbb R^n$. For $\varphi\in \mathcal C^{k,\alpha}(\partial B_r)$ with $k=0,1$ and $\alpha\in [0,1]$, we define
\begin{equation}\begin{aligned}&|\varphi|_{0;\partial B_r}=\sup\limits_{x\in\partial B_r}|\varphi(x)|,\\
&[\varphi]_{0,\alpha;\partial B_r}=\sup\limits_{\begin{subarray}{c}x,y\in\partial B_r\\x\neq y\end{subarray}}\frac{|\varphi(x)-\varphi(y)|}{|x-y|^{\alpha}},\qquad \alpha>0,\\
&[\varphi]_{1,\alpha;\partial B_r}=\inf\{[\Phi]_{1,\alpha;B_r}|\Phi\in\mathcal{C}^{1,\alpha}(\bar{B}_r)\ \textrm{and}\ \Phi |_{\partial B_r}=\varphi\}.
\end{aligned}
\end{equation}
For any $\alpha\in (0,1]$, let $\Phi \in \mathcal C^{1,\alpha}(\bar B_r)$ with $\alpha\in (0,1]$. It is easy to check that
\begin{align}
&[\Phi]_{1;B_r}\leq r^{-1}|\Phi|_{0;\partial B_r}+2r^{\alpha}[\Phi]_{1,\alpha;B_r},\\
&|\Phi|_{0;B_r}\leq 2(|\Phi|_{0;\partial B_r}+r^{1+\alpha}[\Phi]_{1,\alpha;B_r}).
\end{align}

\medskip

Given any $\varphi\in \mathcal C^{1,\alpha}(\partial B_r)$, we can find $\{\Phi_k\}_{k=1}^{\infty}\subset \mathcal{C}^{1,\alpha}(B_r)$ such that
\begin{equation}\Phi_k|_{\partial B_r}=\varphi, \qquad \lim\limits_{k\rightarrow \infty}[\Phi_k]_{1,\alpha;B}=[\varphi]_{1,\alpha;\partial B_r}.\end{equation}
Then $\{\Phi_k\}_{k=1}^{\infty}$ is bounded in $\mathcal{C}^{1,\alpha}(\bar B_r)$. Consequently $\{\Phi_k\}_{k=1}^{\infty}$ has a sub-sequence which converges uniformly to some $\Phi_{\infty}\in \mathcal C^{1,\alpha}(\bar B)$, which satisfies
\begin{equation}\Phi_{\infty}|_{\partial B_r}=\varphi, \qquad [\Phi_{\infty}]_{1,\alpha;B_r}=[\varphi]_{1,\alpha;\partial B_r}.\end{equation}
So any $\varphi\in \mathcal C^{1,\alpha}(\partial B_r)$ can be extended to some $\Phi\in \mathcal C^{1,\alpha}(\bar B_r)$ with $[\Phi]_{1,\alpha;B_r}=[\varphi]_{1,\alpha;\partial B_r}$.

\vspace{0.3cm}

\subsection{The Schauder $\mathcal C^{1,\alpha}$ estimate} The following theorem is well-known

\begin{thm}[\cite{GT}, Theorem 3.9]\label{sch1a} Let $\Omega$ be an open set in $\mathbb{R}^n$, and let  $u\in \mathcal{C}^2(\Omega)$ and $f\in \mathcal{C}(\Omega)$ satisfy the Poisson equation $\Delta u=f$ on $\Omega$. Then for any $\alpha\in(0,1)$, we have
\begin{equation}|u|^*_{1,\alpha;\Omega}\leq C(n,\alpha)(|u|_{0;\Omega}+|f|^{(2)}_{0;\Omega}).\end{equation}
\end{thm}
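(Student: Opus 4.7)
The plan is to reduce the Poisson equation to the classical interplay between Newtonian potentials (which absorb the inhomogeneity) and harmonic functions (which enjoy full interior derivative estimates), working locally on balls adapted to the distance to $\partial\Omega$ and then converting the resulting pointwise bound into the weighted seminorms $|\cdot|^*_{1,\alpha;\Omega}$.

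First I would fix $x_0\in\Omega$, set $d=d_{x_0}/3$, work on the ball $B=B_d(x_0)\subset\Omega$, and write $u=v+w$ on $B$, where
\[
w(x)=\int_{B}\Gamma(x-y)f(y)\,dy
\]
is the Newtonian potential of $f|_{B}$ and $v=u-w$ is therefore harmonic on $B$. The second step is to prove two estimates for $w$: using $\nabla\Gamma(x)=c_n x/|x|^n$ and $|f(y)|\le |f|^{(2)}_{0;\Omega}/d_y^{\,2}\le C|f|^{(2)}_{0;\Omega}/d^{2}$ on $B$, one gets by direct integration
\[
\sup_{B}|\nabla w|\le C(n)\,d\sup_{B}|f|,\qquad [\nabla w]_{\alpha;B}\le C(n,\alpha)\,d^{1-\alpha}\sup_{B}|f|,
\]
the first being elementary (since $|x|^{1-n}$ is locally integrable) and the second being the Newtonian-potential Hölder estimate with $L^\infty$ data, which is where the restriction $\alpha\in(0,1)$ is used (the bound diverges as $\alpha\uparrow 1$). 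The third step is to invoke standard interior derivative estimates for the harmonic function $v$ on $B_{d/2}(x_0)$:
\[
|\nabla v(x)|\le \frac{C(n)}{d}\sup_{B}|v|,\qquad [\nabla v]_{\alpha;B_{d/2}(x_0)}\le \frac{C(n,\alpha)}{d^{1+\alpha}}\sup_{B}|v|,
\]
and to control $\sup_{B}|v|\le |u|_{0;\Omega}+\sup_{B}|w|\le |u|_{0;\Omega}+C(n)|f|^{(2)}_{0;\Omega}$, using $\sup_{B}|w|\le C(n)d^{2}\sup_{B}|f|\le C(n)|f|^{(2)}_{0;\Omega}$.

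Combining these two blocks and multiplying by the correct powers of $d$ yields the local bound
\[
d\,|\nabla u(x_0)|+d^{1+\alpha}\,[\nabla u]_{\alpha;B_{d/2}(x_0)}\le C(n,\alpha)\bigl(|u|_{0;\Omega}+|f|^{(2)}_{0;\Omega}\bigr).
\]
Taking the supremum over $x_0\in\Omega$ gives the weighted $\mathcal C^{1}$ piece $[u]^{*}_{1;\Omega}$ and the ``close-together'' part of $[u]^{*}_{1,\alpha;\Omega}$ immediately. The remaining subtlety---and the main obstacle, though a fairly standard one---is the ``far-apart'' case for the weighted Hölder seminorm: given $x,y\in\Omega$ with, say, $d_x\le d_y$ and $|x-y|>d_x/2$, we bound
\[
d_{x,y}^{\,1+\alpha}\,\frac{|\nabla u(x)-\nabla u(y)|}{|x-y|^{\alpha}}\le d_x^{\,1+\alpha}\cdot\frac{2\sup_\Omega|\nabla u|}{(d_x/2)^{\alpha}}\le C\,d_x\sup_{\Omega}|\nabla u|\le C\,[u]^{*}_{1;\Omega},
\]
which has already been controlled. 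Adding $|u|_{0;\Omega}=|u|^{*}_{0;\Omega}$ gives the full norm $|u|^{*}_{1,\alpha;\Omega}$ and completes the proof.
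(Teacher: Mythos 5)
The paper offers no proof of this statement at all: it is quoted verbatim from Gilbarg--Trudinger (Theorem 3.9), and the only thing the authors do with it is refine it into Proposition 2.2 by a rescaling argument. Your proposal is a correct reconstruction of the standard textbook proof, and it is essentially the argument in [GT]: decompose $u=v+w$ on a ball $B_d(x_0)$ with $d$ comparable to $d_{x_0}$, where $w$ is the Newtonian potential of $f|_B$ and $v$ is harmonic; estimate $\nabla w$ and $[\nabla w]_{\alpha}$ from the kernel with $L^\infty$ density (this is where $\alpha<1$ enters); use interior derivative estimates for $v$; and convert the resulting pointwise bounds into the weighted seminorms by splitting into the cases $|x-y|$ small or large relative to $d_{x,y}$. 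Two small points you should tidy up. First, the bound $\sup_B|w|\le C(n)d^2\sup_B|f|$ fails as written for $n=2$ because $\Gamma=\tfrac{1}{2\pi}\log|z|$ is not scale-invariant; replace $\Gamma(x-y)$ by $\Gamma(x-y)-\Gamma(d)$ (which changes $w$ only by a constant and not $\nabla w$) so that the integral scales as $d^2$. Second, your thresholds are inconsistent: the H\"older estimate for $\nabla u$ is obtained only on $B_{d/2}(x_0)=B_{d_{x_0}/6}(x_0)$, so the ``close-together'' case should be $|x-y|\le d_{x,y}/6$ and the ``far-apart'' case $|x-y|>d_{x,y}/6$; the far-apart computation goes through with the constant $6^{\alpha}$ in place of $2^{\alpha}$. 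With these cosmetic corrections the proof is complete.
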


\medskip

We need a refined version of Theorem \ref{sch1a}, which can be seen as a combination of Theorem \ref{sch1a} and Lemma 6.32 of \cite{GT}.

\begin{prop}\label{sch1a'} Let $\Omega$ be an open set in $\mathbb{R}^n$, and let  $u\in \mathcal{C}^2(\Omega)$ and $f\in \mathcal{C}(\Omega)$ satisfy the Poisson $\Delta u=f$ equation on $\Omega$. Then for any $\alpha\in(0,1)$ and $\mu\in (0,1]$, we have
\begin{align}\label{sch1a'1}&[u]^*_{1;\Omega}\leq C(n)(\mu^{-1}|u|_{0;\Omega}+\mu |f|^{(2)}_{0;\Omega}),\\
&\label{sch1a'2}[u]^*_{1,\alpha;\Omega}\leq C(n,\alpha)(\mu^{-1-\alpha}|u|_{0;\Omega}+\mu^{1-\alpha}|f|^{(2)}_{0;\Omega}).
\end{align}
\end{prop}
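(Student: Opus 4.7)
The plan is to derive Proposition 2.2 from Theorem 2.1 by a standard scaling/localization argument: since the weighted norms $[u]^*_{k,\alpha;\Omega}$ are built to be invariant under rescaling, introducing the parameter $\mu \in (0,1]$ amounts to applying the unscaled Schauder estimate on balls of radius $\rho = \mu d_x/2$ instead of $d_x/2$, and tracking how the extra factor of $\mu$ propagates. This kind of interpolation is exactly what GT's Lemma 6.32 does, so I would follow that template.

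For the first inequality, fix $x \in \Omega$ and $\mu \in (0,1]$, set $\rho = \mu d_x/2$, and note $B_\rho(x) \subset \Omega$ with $d_y \geq d_x/2$ for every $y \in B_\rho(x)$, so
\[
\sup_{B_\rho(x)} |f| \leq 4\, d_x^{-2}\, |f|^{(2)}_{0;\Omega}.
\]
The classical interior gradient estimate for $\Delta u = f$ on $B_\rho(x)$ (the $k=\alpha=0$ analogue of Theorem 2.1) gives $|\nabla u(x)| \leq C(n)\bigl(\rho^{-1}|u|_{0;B_\rho(x)} + \rho\,|f|_{0;B_\rho(x)}\bigr)$; multiplying by $d_x$ and substituting yields $d_x|\nabla u(x)| \leq C(n)\bigl(\mu^{-1}|u|_{0;\Omega} + \mu\,|f|^{(2)}_{0;\Omega}\bigr)$, and taking the supremum over $x$ gives \eqref{sch1a'1}.

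For the Hölder estimate, rescale via $\tilde u(z) = u(x+\rho z)$ on the unit ball, which satisfies $\Delta \tilde u = \tilde f$ with $\tilde f(z) = \rho^2 f(x+\rho z)$ and $|\tilde f|_{0;B_1} \leq \mu^2 |f|^{(2)}_{0;\Omega}$. Applying Theorem 2.1 to $\tilde u$ on $B_1(0)$ and restricting to $B_{1/2}(0)$, then changing variables back, produces the local estimate
\[
\rho^{1+\alpha}\, [u]_{1,\alpha;B_{\rho/2}(x)} \leq C(n,\alpha)\bigl(|u|_{0;\Omega} + \mu^2\,|f|^{(2)}_{0;\Omega}\bigr).
\]

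Finally I would bound $[u]^*_{1,\alpha;\Omega}$ by a two-case split on pairs $x,y \in \Omega$, assuming WLOG $d_{x,y} = d_x$. If $|x-y| \leq \rho/2 = \mu d_{x,y}/4$, then $y \in B_{\rho/2}(x)$ and the local Hölder estimate above directly bounds $d_{x,y}^{1+\alpha} |\nabla u(x) - \nabla u(y)|/|x-y|^\alpha$ by $C(n,\alpha)(\mu^{-1-\alpha}|u|_{0;\Omega} + \mu^{1-\alpha}|f|^{(2)}_{0;\Omega})$, since $d_{x,y}^{1+\alpha}\rho^{-1-\alpha}=(\mu/2)^{-1-\alpha}$. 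Otherwise $|x-y|^\alpha \geq (\mu d_{x,y}/4)^\alpha$, and I would bound $|\nabla u(x) - \nabla u(y)|$ by the triangle inequality using \eqref{sch1a'1} at both endpoints; the powers of $\mu$ again combine to give the same right-hand side. The main obstacle is simply bookkeeping: making sure the factor $\mu^2$ in front of $|f|^{(2)}_{0;\Omega}$ from the Hölder step and the factor $\mu^{-1-\alpha}$ from the rescaling interact correctly in both cases of the split. Once these are checked, \eqref{sch1a'2} follows.
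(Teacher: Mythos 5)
Your proposal is correct and follows essentially the same route as the paper: both localize Theorem \ref{sch1a} to balls of radius comparable to $\mu d_x$ (where $|f|^{(2)}$ picks up the factor $\mu^2$), and both handle $[u]^*_{1,\alpha;\Omega}$ by the dichotomy $|x-y|\lessgtr c\,\mu d_{x,y}$, feeding \eqref{sch1a'1} into the far-apart case. The only cosmetic differences are that you prove \eqref{sch1a'1} by a direct pointwise gradient bound and rescale to the unit ball, whereas the paper routes \eqref{sch1a'1} through the weighted Lipschitz seminorm $[u]^*_{0,1;\Omega}$ and applies the starred estimate on the sub-ball directly.
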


\begin{proof}First we estimate $[u]^*_{0,1;\Omega}$. Let $x,y$ be two distinct poits in $\Omega$.
\begin{itemize}
\item[a).]$|x-y|> \frac{1}{4}\mu d_{x,y}$. We have
\begin{equation}d_{x,y}\frac{|u(x)-u(y)|}{|x-y|}\leq  4\mu^{-1}(|u(x)|+|u(y)|)\leq 8\mu^{-1}|u|_{0;\Omega}.
\end{equation}
\item[b).]$|x-y|\leq \frac{1}{4}\mu d_{x,y}$. Set $D=B_{\frac{1}{2}\mu d_{x,y}}(x)$. For any $z\in D$, we have
\begin{equation}\textstyle \dist(z, \partial D)\leq \frac{1}{2}\mu d_{x,y}, \qquad d_z\geq d_x-|x-z|\geq \frac{1}{2}d_{x,y}\end{equation}consequently
\begin{equation}\dist(z, \partial D)\leq \mu d_z.\end{equation}
By the definitions of $|f|^{(2)}_{0;D}$ and $|f|^{(2)}_{0;\Omega}$, we have
\begin{equation}|f|^{(2)}_{0;D}\leq \mu^2|f|^{(2)}_{0;\Omega}.\end{equation}
Apply Theorem \ref{sch1a} on $D$
\begin{equation}[u]^*_{1;D}\leq C(n)(|u|_{0;D}+|f|^{(2)}_{0;D})\leq C(n)(|u|_{0;\Omega}+\mu^2|f|^{(2)}_{0;\Omega}).\end{equation}
By the relation between $[u]^*_{1;D}$ and $[u]^*_{0,1;D}$, we have
\begin{equation}[u]^*_{0,1;D}\leq C(|u|_{0;D}+|f|^{(2)}_{0;D}).\end{equation}
By the definition of $[u]^*_{0,1;D}$, we have
\begin{equation}\min\{\dist(x,\partial D),\dist(y,\partial D)\}\frac{|u(x)-u(y)|}{|x-y|}\leq [u]^*_{0,1;D},\end{equation}
where
\begin{equation}\textstyle
\min\{\dist(x,\partial D),\dist(y,\partial D)\}=\frac{1}{2}\mu d_{x,y}-|x-y|\geq \frac{1}{4}\mu d_{x,y}.
\end{equation}
Then we obtained
\begin{equation}d_{x,y}\frac{|u(x)-u(y)|}{|x-y|}\leq C(n)(\mu^{-1}|u|_{0;\Omega}+\mu |f|^{(2)}_{0;\Omega}).
\end{equation}
\end{itemize}
Combine a) and b), we have
\begin{equation}[u]^*_{0,1;\Omega}\leq C(n)(\mu^{-1}|u|_{0;\Omega}+\mu |f|^{(2)}_{0;\Omega}),\end{equation}
Since $[u]^*_{1;\Omega}\leq [u]^*_{0,1;\Omega}$, we complete the proof of (\ref{sch1a'1}).

\medskip

By similar discussion, we can obtain
\begin{equation}[u]^*_{1,\alpha;\Omega}\leq C(n,\alpha)(\mu^{-1-\alpha}|u|_{0;\Omega}+\mu^{-\alpha}[u]^*_{1;\Omega}+\mu^{1-\alpha}|f|^{(2)}_{0;\Omega}).\end{equation}
Substitute (\ref{sch1a'1}) into this inequlity, then we obtain (\ref{sch1a'2}).
\end{proof}

\vspace{0.5cm}

\section{Proof of Theorem \ref{in1a}}

In this section we denote $B_r=B_r(0)\subset \mathbb C^{n}$ and  $B=B_1$.

\subsection{Some tricks to simplify the proof} We need the following three usual tricks to simplify the  proof of our interior $\mathcal C^{1,\alpha}$ estimate.

First, when proving Theorem \ref{in1a}, we can consider the case  $r=1$ only. For the case  $r\neq 1$, we can consider $\tilde u(z)=r^2u\left(\frac{z}{r}\right)$ instead.

\medskip

Second, when proving Theorem \ref{in1a}, we can assume $\varphi\in \mathcal C^{1,\alpha}(\bar B_r)$ and satisfies
\begin{equation}|\varphi|_{0;B_r}\leq r^{1+\alpha}[\varphi]_{1,\alpha;\partial B_r},\qquad [\varphi]_{1;B_r}\leq r^{\alpha}[\varphi]_{1,\alpha;\partial B_r}, \qquad [\varphi]_{1,\alpha;B_r}=[\varphi]_{1,\alpha;\partial B_r}.\end{equation}
In section 2, We have pointed out that $\varphi$ can be extended to some $\Phi\in \mathcal C^{1,\alpha}(\bar B_r)$ with $[\Phi]_{1,\alpha;B_r}=[\varphi]_{1,\alpha;\partial B_r}$. To show this trick is reasonable, we only need to transform the equation.
We set
\begin{align}&\tilde{\varphi}(x)=\Phi(x)-\Phi(0)-\langle\nabla \Phi(0), x\rangle,\\
&\tilde{u}(x)=u(x)-\Phi(0)-\langle\nabla \Phi(0), x\rangle,
\end{align}
where $\nabla \Phi(0)$ and $x\in B_r$ are treated as real vector of dimension $2n$, $\langle\nabla \Phi(0), x\rangle$ is their inner product.
Then $\tilde{u}\in PSH(B_r)\cap \mathcal{C}(\bar B_r)$ and satisfies
\begin{equation}\left\{
\begin{array}{rl}
\det(\tilde u_{i\bar{j}})=f,&\textrm{in } B,\\
\tilde u=\tilde \varphi,&\textrm{on } \partial B,
\end{array}
\right.
\end{equation}
Applying Taylor expansion, it is easy to check that
\begin{equation}|\tilde\varphi|_{0;B_r}\leq r^{1+\alpha}[\varphi]_{1,\alpha;\partial B_r},\qquad [\tilde\varphi]_{1;B_r}\leq r^{\alpha}[\varphi]_{1,\alpha;\partial B_r}, \qquad [\tilde\varphi]_{1,\alpha;B_r}=[\varphi]_{1,\alpha;\partial B_r}.\end{equation}
$\tilde u$ differs from $u$ by only a linear function, so their have same regularity. Furthermore, for any relative compact open subset $D$ in $B_r$, if either $\tilde u$ or $u$ is in $\mathcal C^{1,\alpha}(\bar D)$, then
\begin{equation}[u]_{1,\alpha;D}=[\tilde u]_{1,\alpha;D}.\end{equation}
By these conclusion, the second trick really works.

At the other hand, for the mentioned $\tilde u$, we have
\begin{equation}u(x+h)+u(x-h)-2u(x)=\tilde u(x+h)+\tilde u(x-h)-2\tilde u(x),\end{equation}
whenever the two second-order differences is well defined. So the second trick can be used when we try to estimate such a second-order difference, i.e. to prove Lemma \ref{klm1}.

\medskip

Third, when proving the inequalities in Theorem \ref{in1a} and  Lemma \ref{klm1}, we can assume  $u$ is $\mathcal C^2$. Assume $r=1$ and $\varphi\in \mathcal C^{1,\alpha}(\bar B)$ by expanding. For small $\varepsilon>0$, we define following functions on $\bar B$
\begin{align}&\varphi_{\varepsilon}(x)=\int_{\mathbb C^n}\rho_{\varepsilon}(y)\varphi\left((1-\varepsilon)(x-y)\right),\\
&{f_{\varepsilon}}^{\frac{1}{n}}(x)=\int_{\mathbb C^n}\rho_{\varepsilon}(y)f^{\frac{1}{n}}\left((1-\varepsilon)(x-y)\right).
\end{align}
Then $\varphi_{\varepsilon}, {f_{\varepsilon}}^{\frac{1}{n}}\in \mathcal C^{\infty}(\bar B)$ and
\begin{equation}[\varphi_{\varepsilon}]_{1,\alpha;B}\leq [\varphi]_{1,\alpha;B},\qquad [{f_{\varepsilon}}^{\frac{1}{n}}]_{1,\alpha;B}\leq [f^{\frac{1}{n}}]_{1,\alpha;B}.\end{equation}
Let $u_{\varepsilon}\in PSH(B)\cap \mathcal C(\bar B)$ solve
\begin{equation}\left\{
\begin{array}{rl}
\det((u_{\varepsilon})_{i\bar{j}})=f_{\varepsilon},&\textrm{in } B,\\
u_{\varepsilon}=\varphi_{\varepsilon},&\textrm{on } \partial B,
\end{array}
\right.
\end{equation}
Then $u$ is smooth by \cite{CKNS}. When $\varepsilon\rightarrow 0$,  $\varphi_{\varepsilon}$ and ${f_{\varepsilon}}$ converge  uniformly to $\varphi$ and $f$ respectively, by the comparison principle one can easily prove that $u_{\varepsilon}$ converges uniformly to $u$. If the interior estimate in Theorem \ref{in1a} (or Lemma \ref{klm1}) is true for all $u_{\varepsilon}$, then it is also true for $u$. By this approximation of smooth solutions, we can assume $u$ itself is smooth.

One can try to compute in the language of currents. In this way, the condition that $u$ is continuous is enough, but some formulas will become quite complicated.

\vspace{0.3cm}

\subsection{A second-order difference type inequality}

We have the following lemma
\begin{lem}\label{klm1} If $u\in PSH(B)\cap\mathcal{C}(\bar{B})$ is the solution of the equation
\begin{equation}\left\{
\begin{array}{rl}
\det(u_{i\bar{j}})=f,&\textrm{in } B,\\
u=\varphi,&\textrm{on } \partial B,
\end{array}
\right.
\end{equation}
where, $\varphi\in \mathcal{C}^{1,\alpha}(\partial B)$, $f\geq 0$ and $f^{\frac{1}{n}}\in \mathcal{C}^{1,\alpha}(\bar{B})$, $\alpha\in(0,1]$. Then for any $t\in(0,1],\ x\in B_{1-t},\ h\in B_{\frac{1}{2}t}$, we have
\begin{equation}u(x+h)+u(x-h)-2u(x)\leq C(n,t)([\varphi]_{1,\alpha;\partial B}+|f^{\frac{1}{n}}|'_{1,\alpha;B})|h|^{1+\alpha}.
\end{equation}
\end{lem}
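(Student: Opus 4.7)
The plan is to adapt Bedford--Taylor's proof of Theorem \ref{thmBT2}, replacing their $\mathcal{C}^{1,1}$ hypothesis on the data by the weaker $\mathcal{C}^{1,\alpha}$ one. By the three reductions in Subsection 3.1, I may assume $r=1$, that $u$ is smooth, and that $\varphi$ has been extended to $\Phi\in\mathcal{C}^{1,\alpha}(\bar B)$ normalized so that $\Phi(0)=0$, $\nabla\Phi(0)=0$, $|\Phi|_{0;B}\le[\varphi]_{1,\alpha;\partial B}$, and $[\Phi]_{1,\alpha;B}=[\varphi]_{1,\alpha;\partial B}$. For fixed $h$ small, set $D_h:=B\cap(B+h)\cap(B-h)$ and define the symmetric average
\[
v(x):=\tfrac12\bigl[u(x+h)+u(x-h)\bigr],\qquad x\in D_h.
\]
Then $v\in PSH(D_h)\cap\mathcal{C}(\bar D_h)$ as the average of two PSH functions. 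The Brunn--Minkowski inequality $\det(A+B)^{1/n}\ge\det(A)^{1/n}+\det(B)^{1/n}$ for positive semidefinite Hermitian matrices, applied pointwise, gives
\[
\det(v_{i\bar j})^{1/n}(x)\ge\tfrac12\bigl[f^{1/n}(x+h)+f^{1/n}(x-h)\bigr],
\]
and since $f^{1/n}\in\mathcal{C}^{1,\alpha}$, the elementary bound on second-order differences of $\mathcal{C}^{1,\alpha}$ functions yields
\[
\det(v_{i\bar j})^{1/n}\ge f^{1/n}-C_1\,[f^{1/n}]_{1,\alpha;B}\,|h|^{1+\alpha}.
\]

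The heart of the proof is a boundary estimate on $\partial D_h$: with $M:=C_2\bigl([\varphi]_{1,\alpha;\partial B}+|f^{1/n}|'_{1,\alpha;B}\bigr)$,
\[
v(x)-u(x)\le M\,|h|^{1+\alpha}\qquad\text{for every}\qquad x\in\partial D_h.
\]
On $\partial D_h$ at least one of $x\pm h$ lies in $\partial B$; by symmetry I may assume $x+h\in\partial B$, so $u(x+h)=\varphi(x+h)=\Phi(x+h)$. To control the remaining terms I would use an explicit plurisubharmonic subsolution of the Dirichlet problem of the form
\[
\underline u(z):=\Phi(z)+A\bigl(|z|^2-1\bigr),\qquad A=A(n,[\varphi]_{1,\alpha;\partial B},|f^{1/n}|_{0;B}),
\]
valid after a mild mollification of $\Phi$, so that $\underline u$ is PSH with $\det(\underline u_{i\bar j})\ge f$ and hence $\underline u\le u$ on $\bar B$ by the comparison principle. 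Combined with a first-order Taylor expansion of $\Phi$ and the identities relating $|x|$ and $|x-h|$ to $|h|$ on $\partial D_h$ (forced by $|x+h|=1$), this converts $u(x+h)+u(x-h)-2u(x)$ into a quantity of order $|h|^{1+\alpha}$. The normalization $\Phi(0)=\nabla\Phi(0)=0$ is the decisive ingredient: it replaces the otherwise linear-in-$|h|$ behavior by the sharp $|h|^{1+\alpha}$ scaling coming from $[\Phi]_{1,\alpha}$.

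Once the boundary estimate is established, the conclusion follows by a single application of the Monge--Amp\`ere comparison principle. Put $\delta:=C_1\,|f^{1/n}|'_{1,\alpha;B}\,|h|^{1+\alpha}$ and $\tilde v(x):=v(x)+\delta|x|^2$. Then $\tilde v\in PSH(D_h)$ satisfies $\det(\tilde v_{i\bar j})^{1/n}\ge\det(v_{i\bar j})^{1/n}+\delta\ge f^{1/n}$, so $\det(\tilde v_{i\bar j})\ge f=\det(u_{i\bar j})$, while on $\partial D_h$
\[
\tilde v-u\le (M+\delta)\,|h|^{1+\alpha}.
\]
The comparison principle then yields $\tilde v\le u+(M+\delta)|h|^{1+\alpha}$ throughout $D_h$, hence $v\le u+(M+\delta)|h|^{1+\alpha}$. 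For $x\in B_{1-t}$ and $h\in B_{t/2}$ one has $x\pm h\in B_{1-t/2}\subset D_h$, so the lemma follows with $C(n,t)$ absorbing the constants.

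The main obstacle is the boundary estimate on $\partial D_h$: adapting the Bedford--Taylor $\mathcal{C}^{1,1}$ barrier to the lower $\mathcal{C}^{1,\alpha}$ regularity of $\Phi$ and $f^{1/n}$. The extension $\Phi$ has only distributional Hessian, so constructing an explicit PSH subsolution with constants bounded in terms of $[\varphi]_{1,\alpha;\partial B}$ rather than higher norms requires a careful mollification argument or a ball-adapted Perron envelope; everything else in the plan is essentially mechanical once this is in place.
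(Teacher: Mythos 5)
Your reduction steps and the use of the concavity of $\det^{1/n}$ to handle the right-hand side match the paper, but the core of your argument --- plain translations $x\mapsto x\pm h$, the domain $D_h=B\cap(B+h)\cap(B-h)$, and a boundary estimate on $\partial D_h$ --- is not what the paper does, and the step you yourself flag as ``the main obstacle'' is in fact a gap that your sketch does not close. On $\partial D_h$ with, say, $x+h\in\partial B$, you know $u(x+h)=\varphi(x+h)$, but $u(x)$ and $u(x-h)$ are interior values at distance $O(|h|)$ from $\partial B$, and the only tools you propose are a lower barrier $\underline u$ and (implicitly) an upper barrier such as the harmonic majorant $H[\varphi]$. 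Writing $w=H[\varphi]-\underline u\ge 0$, which vanishes on $\partial B$ but is merely $\mathcal C^{1,\alpha}$ with nonzero normal derivative there, the combination $u(x+h)+u(x-h)-2u(x)$ is only controlled by $\bigl[H[\varphi](x+h)+H[\varphi](x-h)-2H[\varphi](x)\bigr]+2w(x)$, and $w(x)\lesssim \dist(x,\partial B)\lesssim |h|$. This gives $O(|h|)$, not $O(|h|^{1+\alpha})$, and no normalization at the origin ($\Phi(0)=\nabla\Phi(0)=0$) can repair an estimate needed at boundary points far from $0$. A secondary unresolved point is that $\Phi+A(|z|^2-1)$ need not be plurisubharmonic for any finite $A$ when $\Phi$ is only $\mathcal C^{1,\alpha}$, since the distributional complex Hessian of such a $\Phi$ need not be bounded below.

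The paper avoids the boundary of $D_h$ altogether by using the M\"obius automorphisms $T_a$ of the ball (formula (\ref{Ta})): it sets $u_x(z)=u(T_{-x}(z))$, so that for every $x$ near the given point the functions $u_{x+h}$, $u_{x-h}$, $u_x$ all solve Monge--Amp\`ere problems on the \emph{whole} ball $B$, with data $\varphi_{x\pm h}=\varphi\circ T_{-(x\pm h)}$ and $f_{x\pm h}$. Since $(a,z)\mapsto T_a(z)$ is smooth in $a$, the second-order differences in $x$ of these data are $O(|h|^{1+\alpha})$ uniformly in $z\in\bar B$, and the comparison principle applied on $B$ to $W=u_{x+h}+u_{x-h}+|h|^{1+\alpha}(-A_1+A_2(|z|^2-1))$ versus $2u_x$, evaluated at $z=0$, yields the lemma. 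If you want to salvage a translation-based argument you would need the sharp boundary asymptotics $|u-\varphi\circ\pi|=o(\dist^{\,1})$, which is not available under $\mathcal C^{1,\alpha}$ data; the automorphism trick is the decisive missing idea.
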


Lemma \ref{klm1} is a generalization of Proposition 6.6 of \cite{BT}. Our proof is also similar to the origin one.

We need to use some automorphism of $B$.  For any $a\in B$, let $v(a)=\sqrt{1-|a|^2}$. We define $T_a\in Auto(B)$ as follow
\begin{equation}\label{Ta}
\begin{aligned}&T_a(z)=\Gamma_a\frac{z-a}{1-a^*z},\qquad a\neq 0,\\
&T_0(z)=z,
\end{aligned}
\end{equation}
where
\begin{equation}\Gamma_a=\frac{aa^*}{1-v(a)}-v(a)I.\end{equation}
We treat $a$ and $z$ as $n\times1$ matrices, the upper index $*$ refer to the transposed conjugation, so $\Gamma_a$ is an $n\times n$ matrix.

It is easy to verify that $T:B\times \bar{B}\rightarrow \bar{B}$
\begin{equation*}T(a,z)=T_a(z)\end{equation*}
is a smooth map. Furthermore
\begin{equation}\label{Ta0}T_a(a)=0,\qquad T_a(0)=-a.\end{equation}

\begin{proof}[Proof of Lemma \ref{klm1}] By the second trick mentioned in 3.1,  we can assume $\varphi\in \mathcal{C}^{1,\alpha}(\bar B)$ and
\begin{equation}|\varphi|_{0;B}\leq [\varphi]_{1,\alpha;\partial B},\qquad [\varphi]_{1;B}\leq [\varphi]_{1,\alpha;\partial B}, \qquad [\varphi]_{1,\alpha;B}=[\varphi]_{1,\alpha;\partial B}.\end{equation}

\medskip

Consider the following functions defined on $B\times \bar B$
\begin{align}\label{U}&U(x,z)=u_x(z)=u(T_{-x}(z)),\\
\label{F}&F(x,z)=f_x(z)=f(T_{-x}(z))|\det JT_{-x}(z)|^2,\\
\label{Phi}&\Phi(x,z)=\varphi_x(z)=\varphi(T_{-x}(z)),
\end{align}
where $JT_a\ (a \in B)$ is the complex Jacobian matrix of the holomorphic map $T_a$.
For any fixed $x\in B$, $u_x$ is in $PSH(B)\cap \mathcal{C}(\bar B)$ and satisfies
\begin{equation*}\left\{
\begin{array}{rl}
\det((u_x)_{i\bar{j}})=f_x,&\textrm{in } B,\\
u_x=\varphi_x,&\textrm{on } \partial B.
\end{array}
\right.
\end{equation*}
Furthermore
\begin{equation*}U(x,0)=u_x(0)=u(x).\end{equation*}

\medskip

Since $\varphi\in \mathcal{C}^{1,\alpha}(\bar B)$, $\Phi\in C^{1,\alpha}(\bar B_{1-\frac{1}{2}t}\times \bar B)$. At the same time, for any $z\in \bar B$, we have the following uniform estimates
\begin{equation}[\Phi(\cdot,z)]_{1,\alpha;B_{1-\frac{1}{2}t}}\leq C(n,t)([\varphi]_{1;B}+[\varphi]_{1,\alpha;B}).\end{equation}
Similarly, $F^{\frac{1}{n}}\in C^{1,\alpha}(\bar B_{1-\frac{1}{2}t}\times \bar B)$. For any $z\in \bar B$, we have the following uniform estimates
\begin{equation}[F^{\frac{1}{n}}(\cdot,z)]_{1,\alpha;B_{1-\frac{1}{2}t}}\leq C(n,t)(|f^{\frac{1}{n}}|_{0;B}+[f^{\frac{1}{n}}]_{1;B}+[f^{\frac{1}{n}}]_{1,\alpha;B}).\end{equation}
For any fixed $x\in B_{1-t},\ h\in B_{\frac{1}{2}t}$, by the Taylor expansion we can obtain
\begin{align}\Phi(x+h,z)+\Phi(x-h,z)-2\Phi(x,z)&\leq 2[\Phi(\cdot,z)]_{1,\alpha;B_{1-\frac{1}{2}t}}|h|^{1+\alpha},\\
F^{\frac{1}{n}}(x+h,z)+F^{\frac{1}{n}}(x-h,z)-2F^{\frac{1}{n}}(x,z)&\geq 2[F^{\frac{1}{n}}(\cdot,z)]_{1,\alpha;B_{1-\frac{1}{2}t}}|h|^{1+\alpha}.
\end{align}
When $z\in \partial B$, $U(\cdot,z)=\Phi(\cdot,z)$, so we have
\begin{equation}(u_{x+h}+u_{x-h}-2u_x)|_{\partial B}\leq 2[\Phi(\cdot,z)]_{1,\alpha;B_{1-\frac{1}{2}t}}|h|^{1+\alpha}.\end{equation}
Consider
\begin{equation}W(z)=u_{x+h}(z)+u_{x-h}(z)+|h|^{1+\alpha}(-A_1+A_2(|z|^2-1)),\end{equation}
where
\begin{equation}A_1=2[\Phi(\cdot,z)]_{1,\alpha;B_{1-\frac{1}{2}t}},\qquad A_2=2[F^{\frac{1}{n}}(\cdot,z)]_{1,\alpha;B_{1-\frac{1}{2}t}},\end{equation}
Then $W\in PSH(B)\cap \mathcal{C}(\bar B)$, and
\begin{equation}W|_{\partial B}=(u_{x+h}+u_{x-h})|_{\partial B}-A_1|h|^{1+\alpha}\leq 2u_x|_{\partial B}\end{equation}
Furthermore, on $B$
\begin{equation}\begin{aligned}
\det(W_{i\bar j})^{\frac{1}{n}}&=\det((u_{x+h})_{i\bar j}+(u_{x-h})_{i\bar j}+A_2|h|^{1+\alpha}\delta_{ij})^{\frac{1}{n}}\\
&\geq\det((u_{x+h})_{i\bar j})^{\frac{1}{n}}+\det((u_{x-h})_{i\bar j})^{\frac{1}{n}}+A_2|h|^{1+\alpha}\\
&=f_{x+h}^{\frac{1}{n}}+f_{x-h}^{\frac{1}{n}}+A_2|h|^{1+\alpha}\\
&\geq 2f_x^{\frac{1}{n}}=\det((2u_x)_{i\bar j})^{\frac{1}{n}}.
\end{aligned}
\end{equation}
By the comparison principle (\cite{BT}, Theorem A), $W\leq 2u_x$. Consequently
\begin{equation}u(x+h)+u(x-h)-2u(x)=W(0)-2u_x(0)+(A_1+A_2)|h|^{1+\alpha}\leq (A_1+A_2)|h|^{1+\alpha},\end{equation}
By the expression of $A_1$ and $A_2$, we conclude the proof of Lemma \ref{klm1}.
\end{proof}

\vspace{0.3cm}

\subsection{Mean value inequality and H\"older continuity} Let $\Omega$ be a bounded open set in $\mathbb R^{n}$. If $v\in \mathcal C^{1,\alpha}(\Omega)$, then for any $x\in \Omega$ and any positive $h\leq \frac{1}{2}d_x$, we have
\begin{equation}\label{mvineq0}\left|\intav_{\partial B_h(x)}v(y)d\sigma_y-v(x)\right|\leq C_xh^{1+\alpha},\end{equation}
where we can take the constant $C_x$ to be $[u]_{1,\alpha;B_{\frac{1}{2}d_x}(x)}$.

Inversely, let $v\in \mathcal C(\Omega)$ and $C_x$ (treated as a function of $x\in \Omega$) be locally bounded.  If (\ref{mvineq0}) holds for any $x\in \Omega$ and any positive $h\leq \frac{1}{2}d_x$, then $v\in \mathcal C^{1,\alpha}(\Omega)$. This result is a corollary of the following Lemma

\begin{lem}\label{klm2} Let  $r\geq t>0$, $B_{r+t}(0)$ be a ball in $\mathbb{R}^n$, and $v\in \mathcal{C}(B_{t+r}(0))$. If there exist a constant $A$ such that
\begin{equation}\label{ineqmv1a}\left|\intav_{\partial B_h(0)}v(x+y)d\sigma_y-v(x)\right|\leq Ah^{1+\alpha},\qquad \forall x\in B_r, h\in(0,t].\end{equation}
Then $v\in \mathcal C^{1,\alpha}(B_r(0))$. Furthermore
\begin{equation}[v]^*_{1,\alpha;B_r}\leq C(n,\alpha)(r^{1+\alpha}t^{-1-\alpha}|v|_{0;B_{r+t}(0)}+Ar^{1+\alpha}).\end{equation}\end{lem}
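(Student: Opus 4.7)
The plan is to smooth $v$ by mollification and then apply the scale-parameter Schauder estimate from Proposition~\ref{sch1a'}. Fix a nonnegative radial $\rho\in\mathcal{C}_c^\infty(B_1)$ with $\int\rho=1$, set $\rho_\epsilon(y):=\epsilon^{-n}\rho(y/\epsilon)$, and $v_\epsilon:=v*\rho_\epsilon\in\mathcal{C}^\infty(B_{r+t-\epsilon})$ for $0<\epsilon\le t$. Radial symmetry lets one rewrite the convolution as a weighted integral of the spherical means $M_s(x):=\intav_{\partial B_s(x)}v\,d\sigma$: one has $v_\epsilon(x)=\int_0^\epsilon\Psi_\epsilon(s)M_s(x)\,ds$ with $\int\Psi_\epsilon=1$, and $\Delta v_\epsilon(x)=\int_0^\epsilon\tilde\Psi_\epsilon(s)M_s(x)\,ds$ with $\int\tilde\Psi_\epsilon=0$ (by the divergence theorem applied to $\Delta\rho_\epsilon$) and $\|\tilde\Psi_\epsilon\|_\infty=O(\epsilon^{-n-2})$. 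Plugging in the hypothesis $|M_s(x)-v(x)|\le As^{1+\alpha}$ yields the two key bounds
\[
|v_\epsilon-v|_{0;B_r}\le C(n)A\epsilon^{1+\alpha},\qquad|\Delta v_\epsilon|_{0;B_r}\le C(n)A\epsilon^{\alpha-1},
\]
valid for every $\epsilon\in(0,t]$.

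Viewing $v_\epsilon$ as a $\mathcal{C}^\infty$ solution of $\Delta v_\epsilon=f_\epsilon$ on $B_r$ with $|f_\epsilon|^{(2)}_{0;B_r}\le r^2\,C(n)A\epsilon^{\alpha-1}$ (since $d_x\le r$ on $B_r$), Proposition~\ref{sch1a'} gives for any $\mu\in(0,1]$
\[
[v_\epsilon]^*_{1,\alpha;B_r}\le C(n,\alpha)\bigl(\mu^{-1-\alpha}|v_\epsilon|_{0;B_r}+\mu^{1-\alpha}|f_\epsilon|^{(2)}_{0;B_r}\bigr).
\]
The crucial algebraic observation is that the balanced choice $\mu=t/r$ (one may assume $t\le r$, else the estimate is trivial) together with the single scale $\epsilon=t$ makes the first term equal $r^{1+\alpha}t^{-1-\alpha}|v|_{0;B_{r+t}}$ and the second equal $Cr^{1+\alpha}A$; hence $v_t$ by itself already satisfies exactly the target inequality.

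The principal obstacle is transferring this bound from $v_t$ back to $v$, because $|\Delta v_\epsilon|\sim A\epsilon^{\alpha-1}$ blows up as $\epsilon\to 0$ and there is no uniform-in-$\epsilon$ Schauder bound on the whole family $\{v_\epsilon\}$. My plan is a dyadic telescoping along $\epsilon_k=2^{-k}t$: the smooth increments $w_k:=v_{\epsilon_k}-v_{\epsilon_{k+1}}$ obey $|w_k|_{0;B_r}\lesssim A\epsilon_k^{1+\alpha}$ and $|\Delta w_k|_{0;B_r}\lesssim A\epsilon_k^{\alpha-1}$, and Proposition~\ref{sch1a'} applied with the \emph{matched} scale $\mu=\epsilon_k/r$ makes the two competing $\epsilon_k$-powers cancel, giving $[w_k]^*_{1;B_r}\lesssim rA\epsilon_k^\alpha$, which is summable. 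Hence $(\nabla v_{\epsilon_k})$ is Cauchy at every interior point and, by the uniform $\mathcal{C}^0$-convergence $v_{\epsilon_k}\to v$, converges to the classical gradient $\nabla v$. An Arzel\`a--Ascoli argument, combined with the bound for $v_t$ together with the telescoping control on the Hölder modulus of $\nabla v$ obtained by comparing $v$ to $v_{\epsilon_k}$ at scale $\epsilon_k\sim|x-y|$, then upgrades this to full membership $v\in\mathcal{C}^{1,\alpha}(B_r)$ with the claimed estimate. The hard part of the argument is precisely to carry out this scale-matching so that the geometric factors really do sum rather than naively invoking the individual-scale Schauder estimate (which is uniformly bounded in $k$ but not summable).
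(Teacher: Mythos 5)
Your proposal is correct and follows essentially the same route as the paper: mollify at dyadic scales $h_k=2^{-k}t$, derive $|v_{h}-v|_{0}\lesssim Ah^{1+\alpha}$ and $|\Delta v_{h}|_{0}\lesssim Ah^{\alpha-1}$ from the spherical-mean hypothesis, apply Proposition~\ref{sch1a'} to the increments $w_k$ with the matched scale $\mu=2^{-k}t/r$, and recover the H\"older modulus of $\nabla v$ by comparing $v$ with $v_{h_k}$ at the scale $h_k/r\sim|x-y|/d_{x,y}$. The only point you leave implicit --- controlling $|\nabla v_{h_k}(x)-\nabla v_{h_k}(y)|$ itself --- is handled in the paper by summing the $[w_j]^*_{1,\gamma}$ seminorms for a supercritical exponent $\gamma\in(\alpha,1)$, which is the natural completion of your sketch.
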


\begin{proof}Let $\rho$ be a  radially symmetrical function on $\mathbb R^n$ satisfying
\begin{itemize}
\item[1).] $\rho\geq 0$ and $\supp \rho\subset B_1(0)$;
\item[2).] $\int_{B_1(0)}\rho=1$.
\end{itemize}
By choosing proper $\rho$, $\sup |\nabla^k \rho|\ (k=0,1,\cdots)$ can be seen as constants depending only on $n$ and $k$.

For any $\varepsilon >0$, we define
\begin{equation}\rho_{\varepsilon}(x)=\varepsilon^{-n}\rho\left(\frac{x}{\varepsilon}\right).\end{equation}

For any $h\in(0,t]$, we set
\begin{equation}v_h=\rho_h*v.\end{equation}
Then $v_h\in \mathcal{C}^{\infty}(B_{r+t-h}(0))$.

For any $x\in B_r(0)$, we have
\begin{equation}\Delta v_h(x)=\int_{B_h(0)}(\Delta \rho_h)(y)v(x-y)dy=\int_0^hds\int_{\partial B_s(0)}(\Delta \rho_h)(y)v(x-y)d\sigma_y.\end{equation}
By the definition of $\rho_h$, $\Delta \rho_h$ is a radially symmetric function, we can treat it as a function of the radius
\begin{equation}(\Delta \rho_h)(x)=(\Delta \rho_h)(|x|), \end{equation}
Furthermore, we have
\begin{equation}|\Delta \rho_h|\leq C(n)h^{-n-2}.\end{equation}
At the same time
\begin{equation}0=\int_{B_h(0)}(\Delta \rho_h)(y)v(x)dy=\int_0^hds\int_{\partial B_s(0)}(\Delta \rho_h)(y)v(x)d\sigma_y,\end{equation}
consequently
\begin{equation}\begin{aligned}\Delta v_h(x)&=\int_0^h(\Delta \rho_h)(s)ds\int_{\partial B_s(0)}(v(x-y)-v(x))d\sigma_y\\
&=\int_0^h(\Delta \rho_h)(s)|\partial B_s(0)|\left(\intav_{\partial B_s(0)}v(x-y)d\sigma_y-v(x)\right)ds.
\end{aligned}
\end{equation}
Combine this equality with (\ref{ineqmv1a}), we have
\begin{equation}|\Delta v_h(x)|\leq \int_0^h C(n)h^{-n-2}s^{n-1}As^{1+\alpha}ds\leq C(n)Ah^{\alpha-1}.
\end{equation}
By similar discussion, we can obtain
\begin{equation}|v_h(x)-v(x)|\leq C(n)Ah^{1+\alpha}.\end{equation}
Then we have
\begin{equation}\label{mv1a1}|v_h-v|_{0;B_r(0)}\leq C(n)Ah^{1+\alpha},\qquad |\Delta v_h|_{0;B_r(0)}\leq C(n)Ah^{\alpha-1}.\end{equation}

For $k=0,1,2,\cdots$, we denote $h_k=2^{-k}t$ and define
\begin{equation}w_k=v_{h_{k+1}}-v_{h_k},\end{equation}
then we have
\begin{equation}|w_k|_{0;B_r(0)}\leq C(n)Ah_k^{1+\alpha},\qquad |\Delta w_k|_{0;B_r(0)}\leq C(n)Ah_k^{\alpha-1}.\end{equation}
By Proposition \ref{sch1a'}, for any $\gamma\in(0,1)$and $\mu\in (0,1]$, we have
\begin{align}&[\nabla w_k]^*_{1;B_r(0)}\leq C(n)(\mu^{-1}|w_k|_{0;B_r(0)}+\mu |\Delta w_k|^{(2)}_{0;B_r(0)}),\\
&|w_k|^*_{1,\gamma;B_r(0)}\leq C(n,\gamma) (\mu^{-1-\gamma}|w_k|_{0;B_r(0)}+\mu^{1-\gamma}|\Delta w_k|^{(2)}_{0;B_r(0)}),\end{align}
St $ \mu=2^{-k}\frac{t}{r}$, then we have
\begin{align}\label{mv1a2}&[w_k]^*_{1;B_r(0)}\leq C(n)Art^{\alpha}2^{-\alpha k},\\
\label{mv1a3}&[w_k]^*_{1,\gamma;B_r(0)}\leq C(n,\gamma)Ar^{1+\gamma}t^{\alpha-\gamma}2^{(\gamma-\alpha)k}.\end{align}

\medskip

First , we need to show that for any $\beta\in (0,\alpha)$, $v\in \mathcal C^{1,\beta}(B_r(0))$. By (\ref{mv1a1}), (\ref{mv1a2}) and (\ref{mv1a3}), we have
\begin{equation}|w_k|^*_{1,\beta;B_r(0)}\leq C(n,\beta)Ar^{1+\beta}t^{\alpha-\beta}2^{-(\alpha-\beta)k}.\end{equation}
Notice that $v_{h_k}=v_t+\sum\limits_{i=0}^{k-1}w_i$, so we have
\begin{equation}\begin{aligned}
|v_{h_k}|^*_{1,\beta;B_r(0)}&\leq |v_t|^*_{1,\beta;B_r(0)}+\sum\limits_{i=0}^{k-1}|w_i|^*_{1,\beta;B_r(0)}\\
&\leq |v_t|^*_{1,\beta;B_r(0)}+C(n,\beta)Ar^{1+\beta}t^{\alpha-\beta}\sum\limits_{i=0}^{k-1}2^{-(\alpha-\beta)i}\\
&\leq |v_t|^*_{1,\beta;B_r(0)}+C(n,\alpha,\beta)Ar^{1+\beta}t^{\alpha-\beta}.
\end{aligned}\end{equation}
This implies $|v_{h_k}|^*_{1,\beta;B_r(0)}$ is uniformly bounded. Furthermore, by (\ref{mv1a1}), when $k\rightarrow \infty$, $v_{h_k}$ uniformmaly converges to $v$. So $v\in \mathcal{C}^{1,\beta}(B_r(0))$, and $|v|^*_{1,\beta;B_r(0)}$ is bounded.

\medskip

Next, we need to estimate $[v_{h_k}-v]^*_{1;B_r(0)}$ and $[v_{h_k}]_{1,\gamma;B_r(0)}$,  where $\gamma\in (\alpha, 1)$. Clearly $v-v_{h_k}=\sum\limits_{i=k}^{\infty}w_i$ and $v_{h_k}=v_t+\sum\limits_{i=0}^{k-1}w_i$.  By (\ref{mv1a2}) and (\ref{mv1a3}), we obtained
\begin{equation}\begin{aligned}
\label{mv1a4}[v-v_{h_k}]^*_{1;B_r(0)}&\leq\sum\limits_{i=k}^{\infty}[w_i]_{1;B_r(0)}\leq C(n)Art^{\alpha}\sum\limits_{i=k}^{\infty}2^{-\alpha i}\\
&\leq C(n,\alpha)Art^{\alpha}2^{-\alpha k},
\end{aligned}
\end{equation}
and
\begin{equation}\begin{aligned}
\label{mv1a5}[v_{h_k}]^*_{1,\gamma;B_r(0)}&\leq [v_t]^*_{1,\gamma;B_r(0)}+\sum\limits_{i=0}^{k-1}[w_i]^*_{1,\gamma;B_r(0)}\\
&\leq [v_t]^*_{1,\gamma;B_r(0)}+C(n,\gamma)Ar^{1+\gamma}t^{\alpha-\gamma}\sum\limits_{i=0}^{k-1}2^{(\gamma-\alpha)i}\\
&\leq [v_t]^*_{1,\gamma;B_r(0)}+C(n,\gamma,\alpha)Ar^{1+\gamma}t^{\alpha-\gamma}2^{(\gamma-\alpha)k}.
\end{aligned}\end{equation}

\medskip

At last, we start to estimate $[v]^*_{1,\alpha;B_r(0)}$. Let $x,y$ be any two distinct points in $B_r(0)$, denote
\begin{equation}
d_x=\dist(x,\partial B_r(0)), \qquad d_y=\dist(y,\partial B_r(0)), \qquad d_{x,y}=\min\{d_x,d_y\}.\end{equation}
\begin{itemize}
\item[a).]$|x-y|\geq\frac{t}{r}d_{x,y}$. In this case
\begin{equation}\begin{aligned}
d_{x,y}^{1+\alpha}\frac{|\nabla v(x)-\nabla v(y)|}{|x-y|^{\alpha}}&\leq \left(\frac{|x-y|}{d_{x,y}}\right)^{\alpha}(d_x|\nabla v(x)|+d_y|\nabla v(y)|)\\
&\leq 2r^{\alpha}t^{-\alpha}[v]^*_{1;B_r(0)}.
\end{aligned}\end{equation}
on the other hand
\begin{equation}[v]^*_{1;B_r(0)}\leq [v_t]^*_{1;B_r(0)}+[v \!-\! v_t]^*_{1;B_r(0)}\leq [v_t]^*_{1;B_r(0)}+C(n,\alpha)Art^{\alpha},
\end{equation}
so we have
\begin{equation}d_{x,y}^{1+\alpha}\frac{|\nabla v(x)-\nabla v(y)|}{|x-y|^{\alpha}}\leq 2r^{\alpha}t^{-\alpha}[v_t]^*_{1;B_r(0)}+C(n,\alpha)Ar^{1+\alpha}.\end{equation}
\item[b).]$|x-y|<\frac{t}{r}d_{x,y}$. We choose integer $k\geq 1$ such that
\begin{equation}\label{mv1a*}
2^{-k}\frac{t}{r}\leq \frac{|x-y|}{d_{x,y}}< 2^{-k+1}\frac{t}{r}.
\end{equation}
 Useing (\ref{mv1a4}) and (\ref{mv1a5}), we obtain
\begin{align}&d_x|\nabla v(x)-\nabla v_{h_k}(x)|\leq C(n)Art^{\alpha}2^{-\alpha k},\\
&d_y|\nabla v(y)-\nabla v_{h_k}(y)|\leq C(n)Art^{\alpha}2^{-\alpha k},
\end{align}
and
\begin{equation}
d_{x,y}^{1+\gamma}\frac{|\nabla v_{h_k}(x) - \nabla v_{h_k}(y)|}{|x-y|^{\gamma}}\\
\leq [v_t]^*_{1,\gamma;B_r(0)}+C(n,\gamma,\alpha)Ar^{1+\gamma}t^{\alpha-\gamma}2^{(\gamma-\alpha)k}.
\end{equation}
Furthermore, we have
\begin{equation}\begin{aligned}
d_{x,y}^{1+\alpha}|\nabla v(x)-\nabla v(y)|&\leq d_{x,y}^{1+\alpha}(|\nabla v(x)-\nabla v_{h_k}(x)|+|\nabla v(y)-\nabla v_{h_k}(y)|\\
&\quad+|\nabla v_{h_k}(x)-\nabla v_{h_k}(y)|)\\
&\leq d_{x,y}^{\alpha}(d_x|\nabla v(x)-\nabla v_{h_k}(x)|+d_y|\nabla v(y)-\nabla v_{h_k}(y)|)\\
&\quad +d_{x,y}^{1+\alpha}|\nabla v_{h_k}(x) - \nabla v_{h_k}(y)|,
\end{aligned}\end{equation}
consequently
\begin{equation}\begin{aligned}
&\quad d_{x,y}^{1+\alpha}\frac{|\nabla v(x) - \nabla v(y)|}{|x-y|^{\alpha}}\\
&\leq \left(\frac{|x-y|}{d_{x,y}}\right)^{\gamma-\alpha}([v_t]^*_{1,\gamma;B_r(0)}+C(n,\gamma,\alpha)Ar^{1+\gamma}t^{\alpha-\gamma}2^{(\gamma-\alpha)k})\\
&\quad +C(n,\gamma,\alpha)Art^{\alpha}\left(\frac{|x-y|}{d_{x,y}}\right)^{-\alpha}2^{-\alpha k}\\
&\leq 2r^{\alpha-\gamma}t^{\gamma-\alpha}[v_t]^*_{1,\gamma;B_r(0)}+C(n,\gamma,\alpha)Ar^{1+\alpha}.
\end{aligned}\end{equation}
\end{itemize}

Combine a) and b), we have
\begin{equation}d_{x,y}^{1+\alpha}\frac{|\nabla v(x)-\nabla v(y)|}{|x-y|^{\alpha}}\leq 2r^{\alpha}t^{-\alpha}[v_t]^*_{1;B_r(0)}+2r^{\alpha-\gamma}t^{\gamma-\alpha}[v_t]^*_{1,\gamma;B_r(0)}+C(n,\gamma,\alpha)Ar^{1+\alpha}.\end{equation}
By the definition of $[v]^*_{1,\alpha;B_r(0)}$, we have
\begin{equation}\label{mv1a6}[v]^*_{1,\alpha;B_r(0)}\leq 2r^{\alpha}t^{-\alpha}[v_t]^*_{1;B_r(0)}+2r^{\alpha-\gamma}t^{\gamma-\alpha}[v_t]^*_{1,\gamma;B_r(0)}+C(n,\gamma,\alpha)Ar^{1+\alpha}.\end{equation}
For $v_t$, by $|v_t|_{0;B_r(0)}\leq |v|_{0;B_{r+t}(0)}$ and $|\Delta v_t|_{0;B_r(0)}\leq C(n)At^{\alpha-1}$, we have the following estimate
\begin{align}
&[v_t]^*_{1;B_r(0)}\leq C(n)(rt^{-1}|v|_{0;B_{r+t}(0)}+Art^{\alpha}),\\
&[v_t]^*_{1,\gamma;B_r(0)}\leq C(n,\gamma)(r^{1+\gamma}t^{-1-\gamma}|v|_{0;B_{r+t}(0)}+Ar^{1+\gamma} t^{\alpha-\gamma}),
\end{align}
Substitute these estimates into (\ref{mv1a6}) and set $\gamma=\frac{1+\alpha}{2}$, we obtain
\begin{equation*}[v]^*_{1,\alpha;B_r(0)}\leq C(n,\alpha)(r^{1+\alpha}t^{-1-\alpha}|v|_{0;B_{r+t}(0)}+Ar^{1+\alpha}).\end{equation*}
This concludes the proof of Lemma \ref{klm2}.
\end{proof}

\vspace{0.3cm}

\subsection{Proof of Theorem \ref{in1a}} Using Lemma \ref{klm1}, Lemma \ref{klm2} and the fact that $u$ is plurisubharmonic, we can easily prove Theorem \ref{in1a}.
\begin{proof}We only need to consider the case $r=1$. We assume $\varphi\in \mathcal{C}^{1,\alpha}(\bar B)$ and
\begin{equation}|\varphi|_{0;B}\leq [\varphi]_{1,\alpha;\partial B},\qquad [\varphi]_{1;B}\leq [\varphi]_{1,\alpha;\partial B}, \qquad [\varphi]_{1,\alpha;B}=[\varphi]_{1,\alpha;\partial B}.\end{equation}
By comparison principle we can easily obtain a estimate for  $[u]_{0;B}$
\begin{equation}\label{in1a_0}|u|_{0,\alpha;B}\leq |\varphi|_{0;B}+|f^{\frac{1}{n}}|_{0;B}\leq [\varphi]_{1,\alpha;\partial B}+|f^{\frac{1}{n}}|_{0;B}.\end{equation}

\medskip

By Lemma \ref{klm1}, for any $x\in B_{1-\frac{1}{2}t},\ h\in B_{\frac{1}{4}t}$, we have
\begin{equation}\label{in1a_1}u(x+h)+u(x-h)-2u(x)\leq A|h|^{1+\alpha}.
\end{equation}
where
\begin{equation}\label{in1a_2} A=C(n,t)([\varphi]_{1,\alpha;\partial B}+|f^{\frac{1}{n}}|'_{1,\alpha;B}).\end{equation}
For any fixed $x\in B_{1-\frac{1}{2}t},\ h\in(0,\frac{1}{4}t)$, obviously
\begin{equation}\intav_{\partial B_h(x)}u(x+y)d\sigma_y=\intav_{\partial B_h(x)}u(x-y)d\sigma_y,\end{equation}
so we have
\begin{equation}\label{in1a_3}\intav_{\partial B_h(x)}u(x+y)d\sigma_y-u(x)=\frac{1}{2}\intav_{\partial B_h(x)}(u(x+y)+u(x-y)-2u(x))d\sigma_y.\end{equation}
$u$ is plurisubharmonic, so it is subharmonic. By this property, (\ref{in1a_1}) and (\ref{in1a_3}), we have
\begin{equation}\label{ineq1a}0\leq\intav_{\partial B_h}u(x+y)d\sigma_y-u(x) \leq \frac{1}{2}Ah^{1+\alpha},\qquad \forall x\in B_{1-\frac{1}{2}t},\ h\in(0,\textstyle\frac{1}{4}t).\end{equation}
By Lemma \ref{klm2}, $u\in \mathcal C^{1,\alpha}(B_{1-\frac{1}{2}t})$. Furthermore
\begin{equation}|u|^*_{1,\alpha;B_{1-\frac{1}{2}t}}\leq C(n,\alpha,t)(|u|_{0;B}+A).
\end{equation}
Combine (\ref{in1a_0}) and (\ref{in1a_2}), we obtain
\begin{equation}[u]_{1,\alpha;B_{1-t}}\leq C(n,\alpha,t)([\varphi]_{1,\alpha;\partial B}+|f^{\frac{1}{n}}|'_{1,\alpha;B}).\end{equation}
This completes the proof of Theorem \ref{in1a}.
\end{proof}

\vspace{0.5cm}

\section{Proof of Theorem \ref{in0a}}
The proof of Theorem \ref{in0a} is similar to the proof of Lemma 3.1 and simpler. In consideration of the integrity of this paper, we will give the whole proof. The readers can skip this section.

\medskip

We only need to consider the case $r=1$. Namely we need to prove
\begin{lem}\label{in0a'} If $u\in PSH(B)\cap\mathcal{C}(\bar{B})$ is the solution of the equation
\begin{equation}\left\{
\begin{array}{rl}
\det(u_{i\bar{j}})=f,&\textrm{in } B,\\
u=\varphi,&\textrm{on } \partial B,
\end{array}
\right.
\end{equation}
where, $\varphi\in \mathcal{C}^{0,\alpha}(\partial B)$, $f\geq 0$ and $f^{\frac{1}{n}}\in \mathcal{C}^{0,\alpha}(\bar{B})$, $\alpha\in(0,1]$. Then for any $t\in(0,1],\ x_1,x_2\in B_{1-t}$, we have
\begin{equation}|u(x_1)-u(x_2)|\leq C(n,t)([\varphi]_{0,\alpha;\partial B}+|f^{\frac{1}{n}}|'_{0,\alpha;B})|x_1-x_2|^{\alpha}.
\end{equation}
\end{lem}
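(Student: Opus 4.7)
The plan is to mimic the proof of Lemma \ref{klm1}, but using first-order differences instead of second-order ones, and comparing $u$ at two nearby points directly via the automorphisms $T_{-x}$ of the unit ball. As in the third trick of Section 3.1, we first smooth the data so that we may assume $u\in \mathcal{C}^2(\bar B)$; the uniform estimate passes to the limit by the comparison principle. By symmetry in $x_1$ and $x_2$, it suffices to establish the one-sided bound $u(x_1)-u(x_2)\le C(n,t)([\varphi]_{0,\alpha;\partial B}+|f^{1/n}|'_{0,\alpha;B})|x_1-x_2|^{\alpha}$.

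Fix $x_1,x_2\in B_{1-t}$ and, as in \eqref{U}--\eqref{Phi}, put $u_{x}(z)=u(T_{-x}(z))$, $\varphi_{x}(z)=\varphi(T_{-x}(z))$ and $f_{x}(z)=f(T_{-x}(z))|\det JT_{-x}(z)|^{2}$, so that each $u_{x}$ solves the Dirichlet problem $\det((u_{x})_{i\bar j})=f_{x}$ in $B$ with boundary data $\varphi_{x}$, and $u_{x}(0)=u(x)$. Since the map $(x,z)\mapsto T_{-x}(z)$ is smooth on $\bar B_{1-t}\times\bar B$ and an isometry of $\partial B$ to itself when $z\in\partial B$, the $\mathcal{C}^{0,\alpha}$ hypothesis on $\varphi$ and on $f^{1/n}$ gives, uniformly in $z$,
\begin{align*}
|\varphi_{x_1}(z)-\varphi_{x_2}(z)|&\le A|x_1-x_2|^{\alpha}, \qquad z\in\partial B,\\
|f_{x_1}^{1/n}(z)-f_{x_2}^{1/n}(z)|&\le B|x_1-x_2|^{\alpha}, \qquad z\in \bar B,
\end{align*}
with constants $A=C(n,t)[\varphi]_{0,\alpha;\partial B}$ and $B=C(n,t)|f^{1/n}|'_{0,\alpha;B}$; for $B$ one splits the product $f^{1/n}(T_{-x}(z))|\det JT_{-x}(z)|^{2/n}$ into two factors, using that $T$ and $\det JT$ are Lipschitz in $x$ on $\bar B_{1-t}\times \bar B$.

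Now introduce the auxiliary function
\begin{equation*}
W(z)=u_{x_1}(z)+B|x_1-x_2|^{\alpha}(|z|^{2}-1)-A|x_1-x_2|^{\alpha}.
\end{equation*}
Then $W\in PSH(B)\cap\mathcal{C}(\bar B)$. On $\partial B$ one has $W=\varphi_{x_1}-A|x_1-x_2|^{\alpha}\le \varphi_{x_2}=u_{x_2}|_{\partial B}$ by the choice of $A$. In $B$, the inequality $\det(M+N)^{1/n}\ge \det(M)^{1/n}+\det(N)^{1/n}$ for nonnegative Hermitian matrices yields
\begin{equation*}
\det(W_{i\bar j})^{1/n}\ge f_{x_1}^{1/n}+B|x_1-x_2|^{\alpha}\ge f_{x_2}^{1/n}=\det((u_{x_2})_{i\bar j})^{1/n},
\end{equation*}
by the choice of $B$. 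The Bedford--Taylor comparison principle (\cite{BT}, Theorem A) therefore gives $W\le u_{x_2}$ throughout $B$, and evaluating at $z=0$ (where $|z|^{2}-1=-1$) produces $u(x_1)-(A+B)|x_1-x_2|^{\alpha}\le u(x_2)$, i.e.\ the desired one-sided estimate. Swapping $x_1\leftrightarrow x_2$ finishes the proof. The only substantive point to check is the $x$-regularity of $\varphi_x$ and $f_x^{1/n}$ uniformly in $z$, which is the main technical obstacle but follows cleanly from the smoothness of $T$ on $\bar B_{1-t}\times\bar B$ together with the hypothesized $\mathcal{C}^{0,\alpha}$ bounds on the data.
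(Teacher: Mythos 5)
Your proposal is correct and follows essentially the same route as the paper: the same family $u_x(z)=u(T_{-x}(z))$, the same barrier $W=u_{x_1}+|x_1-x_2|^{\alpha}(-A_1+A_2(|z|^2-1))$, the same use of $\det(M+N)^{1/n}\ge\det(M)^{1/n}+\det(N)^{1/n}$ and the Bedford--Taylor comparison principle, and the same symmetry argument at the end. One cosmetic point: $T_{-x}$ preserves $\partial B$ but is not an isometry of the sphere; this does not matter, since (as you note) the needed uniform H\"older bounds in $x$ follow from the Lipschitz dependence of $T_{-x}(z)$ and $\det JT_{-x}(z)$ on $x$ over $\bar B_{1-t}\times\bar B$.
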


\begin{proof}Like in the proof of Lemma \ref{klm1}, we consider the following functions defined on $B\times \bar B$
\begin{align*}&U(x,z)=u_x(z)=u(T_{-x}(z)),\\
&F(x,z)=f_x(z)=f(T_{-x}(z))|\det JT_{-x}(z)|^2,
\end{align*}
and the the following function defined on $B\times \partial B$
\begin{equation*}\Phi(x,z)=\varphi_x(z)=\varphi(T_{-x}(z)).\end{equation*}
For any fixed $x\in B$, we also have $u_x\in PSH(B)\cap \mathcal{C}(\bar B)$ and satisfies
\begin{equation*}\left\{
\begin{array}{rl}
\det((u_x)_{i\bar{j}})=f_x,&\textrm{in } B,\\
u_x=\varphi_x,&\textrm{on } \partial B.
\end{array}
\right.
\end{equation*}
and
\begin{equation*}u_x(0)=u(x).\end{equation*}

\medskip

Since $\varphi\in \mathcal C^{\alpha}(\partial B)$ and $f^{\frac{1}{n}}\in \mathcal C^{\alpha}(\bar B)$, $\Phi\in \mathcal C^{\alpha}(\bar B_{1-t}\times \partial B)$ and $F^{\frac{1}{n}}\in \mathcal C^{\alpha}(\bar B_{1-t}\times \bar B)$. Moreover, we have
\begin{equation}[\Phi(\cdot,z)]_{0,\alpha;B_{1-t}}\leq C(n,t)[\varphi]_{0,\alpha;\partial B},
\end{equation}
for any $z\in \partial B$, and
\begin{equation}
[F^{\frac{1}{n}}(\cdot,z)]_{0,\alpha;B_{1-t}}\leq C(n,t)|f^{\frac{1}{n}}|'_{0,\alpha;B},
\end{equation}
for any $z\in \bar B$.
Since $x_1,x_2\in \bar B_{1-t}$, we have
\begin{equation}|\varphi_{x_1}(z)-\varphi_{x_2}(z)|=|\Phi(x_1,z)-\Phi(x_2,z)|\leq C(n,t)[\varphi]_{0,\alpha;\partial B}|x_1-x_2|^{\alpha},
\end{equation}
for any $z\in \partial B$, and
\begin{equation}
|f_{x_1}^{\frac{1}{n}}(z)-f_{x_2}^{\frac{1}{n}}(z)|=|F^{\frac{1}{n}}(x_1,z)-F^{\frac{1}{n}}(x_2,z)|\leq C(n,t)|f^{\frac{1}{n}}|'_{0,\alpha;B}|x_1-x_2|^{\alpha},
\end{equation}
for any $z\in \bar B$.

\medskip

Consider $W=u_{x_1}+|x_1-x_2|^{\alpha}(-A_1+A_2(|z|^2-1))$, where
\begin{equation}A_1=C(n,t)[\varphi]_{0,\alpha;\partial B},\qquad A_2=C(n,t)|f^{\frac{1}{n}}|'_{0,\alpha;B}.\end{equation}
Then $W\in PSH(B)\cap \mathcal C(\bar B)$. On $\partial B$, we have
\begin{equation}
W=u_{x_1}-A_1|x_1-x_2|^{\alpha}\leq u_{x_2}.
\end{equation}
In $B$, we have
\begin{equation}\begin{aligned}\det(W_{i\bar j})^{\frac{1}{n}}&=\det((u_{x_1})_{i\bar j}+A_2|x_1-x_2|^{\alpha}\delta_{ij})^{\frac{1}{n}}\\
&\geq f_{x_1}^{\frac{1}{n}}+A_2|x_1-x_2|^{\alpha}\\
&\geq f_{x_2}^{\frac{1}{n}}=\det((u_{x_2})_{i\bar j})^{\frac{1}{n}}.
\end{aligned}\end{equation}
By the comparison principle, we have $W\leq u_{x_2}$. Consequently
\begin{equation}u(x_1)-u(x_2)=W(0)-u_{x_2}(0)+(A_1+A_2)|x_1-x_2|^{\alpha}\leq (A_1+A_2)|x_1-x_2|^{\alpha}.
\end{equation}
Similarly we have
\begin{equation}u(x_2)-u(x_1)\leq (A_1+A_2)|x_1-x_2|^{\alpha}.
\end{equation}
This completes the proof of Lemma \ref{in0a'}.
\end{proof}

\vspace{0.5cm}

\section{A short discussion about equations on  Hermitian manifolds}

Recently, along with the study of Hermitian manifolds, theories about the complex Monge-Amp\`ere equation on Hermitian manifolds are greatly developed. S.Dinew, S.Kolodziej and N.C.Nugyen (\cite{DK10, KN15,Ng16}) developed the thoeory of weak sotution and established $L^{\infty}$ and H\"older estimate for the solustion when the  right hand side is a nonpositive $L^p\ (p>1)$ functions; B.Guan and Q.Li (\cite{GL10}) obtained some results about the existence of smooth solutions to the Dirichlet problem with smooth data; X. Zhang and X.W.Zhang (\cite{ZhZh}) established a Bedford-Taylor $\mathcal C^{1,1}$ estimate and an interior Calabi $\mathcal C^3$ estimate; etc.

\medskip

Our $\mathcal C^{k,\alpha}\ (i=0,1,\ \alpha\in (0,1])$ estimate can also be generalized to the Hermitian case. In fact we have

\begin{thm}\label{hin0a}Let $B_r(0)$ be a ball in $\mathbb C^n$, $\omega$ is a Hermitian form on $\bar B_r(0)$. Let $i=0,1$, $\alpha\in (0,1]$, $\varphi\in \mathcal{C}^{i,\alpha}(\partial B_r(0))$ and $0\leq f^{\frac{1}{n}}\in \mathcal{C}^{i, \alpha}(\bar B_r(0))$. If $u\in PSH(B_r(0),\omega)\cap \mathcal{C}(\bar B_r(0))$ is a weak solution of
\begin{equation}\left\{\begin{array}{ll}
(\omega+\sqrt{-1}\partial\bar\partial u)^n=n!fdV, &\textrm{in }B_r(0),\\
u=\varphi, &\textrm{on }\partial B_r(0).
\end{array}\right.
\end{equation}
Then $u\in\mathcal{C}^{i,\alpha}(\bar B_r(0))$. Furher more , for any $t\in (0,1)$, we have
\begin{equation}[u]_{i,\alpha;B_{(1-t)r}(0)}\leq C(n,t)([\varphi]_{i,\alpha;\partial B_r(0)}+r^{2-i-\alpha}|f^{\frac{1}{n}}|'_{i,\alpha;B_r(0)}+r^{2-i-\alpha}|\omega|'_{i,\alpha;B_r(0)}).\end{equation}
\end{thm}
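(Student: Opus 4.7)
The plan is to adapt the proofs of Theorems \ref{in1a} and \ref{in0a} by treating the Hermitian background $\omega$ as an additional piece of data, pulled back along the automorphisms $T_a$ of the ball alongside $\varphi$ and $f^{1/n}$. After the usual scaling $\tilde u(z)=r^{-2}u(rz)$ (with compatible scalings of $\omega,\varphi,f$) we reduce to the case $r=1$. Following the notation of Lemma \ref{klm1}, set $u_x=u\circ T_{-x}$, $\varphi_x=\varphi\circ T_{-x}$, $\omega_x=T_{-x}^{*}\omega$, and take $f_x$ to absorb the volume factor $|\det JT_{-x}|^{2}$; then $u_x\in PSH(B,\omega_x)\cap\mathcal C(\bar B)$ solves $(\omega_x+\sqrt{-1}\partial\bar\partial u_x)^n=n!f_x\,dV$ with $u_x|_{\partial B}=\varphi_x$, and the joint map $(x,z)\mapsto(\omega_x(z),\varphi_x(z),f_x^{1/n}(z))$ inherits the $\mathcal C^{i,\alpha}$ bounds of the original data on $B_{1-t/2}\times\bar B$, with constants depending only on $n$ and $t$.

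For the $\mathcal C^{0,\alpha}$ case I would mimic Lemma \ref{in0a'}. The new point is that $u_{x_1}$ and $u_{x_2}$ are plurisubharmonic with respect to different backgrounds $\omega_{x_1}\neq\omega_{x_2}$; however the H\"older bound $\omega_{x_1}-\omega_{x_2}\geq -B|x_1-x_2|^{\alpha}\sqrt{-1}\partial\bar\partial|z|^{2}$, with $B\leq C(n,t)|\omega|'_{0,\alpha;B}$, allows one to enlarge the coefficient of $(|z|^{2}-1)$ in the test function. The competitor
\begin{equation*}
W(z)=u_{x_1}(z)+|x_1-x_2|^{\alpha}\bigl(-A_1+(A_2+B)(|z|^{2}-1)\bigr)
\end{equation*}
is then $\omega_{x_2}$-plurisubharmonic, dominated by $u_{x_2}$ on $\partial B$ (for $A_1\gtrsim[\varphi]_{0,\alpha}$), and by the concavity of $\det^{1/n}$ on positive Hermitian matrices satisfies $(\omega_{x_2}+\sqrt{-1}\partial\bar\partial W)^n\geq n!f_{x_2}\,dV$ (for $A_2\gtrsim|f^{1/n}|'_{0,\alpha}$). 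The Hermitian comparison principle (cf.\ \cite{KN15,ZhZh}) gives $W\leq u_{x_2}$; evaluating at $z=0$ and swapping $x_1,x_2$ yields the H\"older estimate.

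For the $\mathcal C^{1,\alpha}$ case I would run the second-order difference argument of Lemma \ref{klm1}: Taylor expansion in $x$ gives $2\omega_x-\omega_{x+h}-\omega_{x-h}\geq -B|h|^{1+\alpha}\sqrt{-1}\partial\bar\partial|z|^{2}$ with $B\leq C(n,t)|\omega|'_{1,\alpha;B}$, and the analogous test function
\begin{equation*}
W(z)=u_{x+h}(z)+u_{x-h}(z)+|h|^{1+\alpha}\bigl(-A_1+(A_2+B)(|z|^{2}-1)\bigr)
\end{equation*}
is $2\omega_x$-plurisubharmonic, bounded above by $2u_x$ on $\partial B$, and satisfies the required Monge--Amp\`ere inequality by super-additivity of $\det^{1/n}$ applied to the three positive Hermitian summands $\omega_{x\pm h}+\sqrt{-1}\partial\bar\partial u_{x\pm h}$ and $A_2|h|^{1+\alpha}\sqrt{-1}\partial\bar\partial|z|^{2}$. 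The resulting bound $u(x+h)+u(x-h)-2u(x)\leq C|h|^{1+\alpha}$, together with the quantitative subharmonic mean-value inequality $\intav_{\partial B_h(x)}u\,d\sigma-u(x)\geq -Ch^{2}$ (which follows from $\Delta u\geq -\mathrm{tr}\,\omega$), feeds into Lemma \ref{klm2} to produce the interior $\mathcal C^{1,\alpha}$ estimate.

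The main technical obstacle is the careful construction and bookkeeping of these correction terms that reconcile the mismatched Hermitian backgrounds ($\omega_{x_1}$ vs.\ $\omega_{x_2}$, and $\omega_{x+h}+\omega_{x-h}$ vs.\ $2\omega_x$) while keeping the modified test function admissible for the Hermitian comparison principle and preserving the right order $|h|^{1+\alpha}$ (resp.\ $|x_1-x_2|^{\alpha}$) in the Monge--Amp\`ere side; once this is in place, the remainder of the argument is structurally identical to Sections 3 and 4, with the contribution of $\omega$ simply added to the right-hand side of the final estimate.
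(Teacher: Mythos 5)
Your proposal is correct and follows exactly the route the paper intends: the paper in fact omits the proof of Theorem \ref{hin0a}, saying only that one adapts Lemma \ref{klm1} to the Hermitian case as in \cite{ZhZh}, and your sketch supplies precisely that adaptation (pulling back $\omega$ along $T_{-x}$, absorbing the mismatch of backgrounds into an extra multiple of $|z|^{2}-1$, and invoking the Hermitian comparison principle). Your observation that the mean-value lower bound must be replaced by $\intav_{\partial B_h(x)}u\,d\sigma-u(x)\geq -Ch^{2}$ coming from $\Delta u\geq -\mathrm{tr}\,\omega$ is exactly the extra point needed to feed Lemma \ref{klm2}, so the argument is complete.
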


The case $i=\alpha=0$ is already known in \cite{Ng16}; the case $i=\alpha=1$ is already konwn in \cite{ZhZh}. Like in \cite{ZhZh}, one can adapt the proof of Lemma \ref{klm1} to the Hermitian case and  prove Theorem \ref{hin0a} easily. We omit the proof.

\vspace{0.5cm}


\vspace{0.5cm}

\end{document}